\title{Mean action and the Calabi invariant}
\author{Michael Hutchings\footnote{Partially supported by NSF grant DMS-1406312.}}
\date{}
\newcommand{\mc}[1]{{\mathcal #1}}
\numberwithin{equation}{section}
\newtheorem{theorem}{Theorem}[section]
\newtheorem{proposition}[theorem]{Proposition}
\newtheorem{corollary}[theorem]{Corollary}
\newtheorem{lemma}[theorem]{Lemma}
\newtheorem{lemma-definition}[theorem]{Lemma-Definition}
\theoremstyle{definition}
\newtheorem{remark}[theorem]{Remark}
\newtheorem{example}[theorem]{Example}
\newcommand{\floor}[1]{\left\lfloor #1 \right\rfloor}
\newcommand{\ceil}[1]{\left\lceil #1 \right\rceil}
\newcommand{\C}{{\mathbb C}}
\newcommand{\Q}{{\mathbb Q}}
\newcommand{\R}{{\mathbb R}}
\newcommand{\N}{{\mathbb N}}
\newcommand{\Z}{{\mathbb Z}}
\newcommand{\A}{{\mathcal A}}
\newcommand{\op}{\operatorname}
\newcommand{\M}{\mc{M}}
\newcommand{\Ker}{\op{Ker}}
\newcommand{\current}{\mathscr{C}}
\newcommand{\calabi}{\mathcal{V}}
\newcommand{\bpm}{\begin{pmatrix}}
\newcommand{\epm}{\end{pmatrix}}
\renewcommand{\epsilon}{\varepsilon}
\begin{document}

\setcounter{tocdepth}{2}

\maketitle

\begin{abstract}
Given an area-preserving diffeomorphism of the closed unit disk which is a rotation near the boundary, one can naturally define an ``action'' function on the disk which agrees with the rotation number on the boundary. The Calabi invariant of the diffeomorphism is the average of the action function over the disk. Given a periodic orbit of the diffeomorphism, its ``mean action'' is defined to be the average of the action function over the orbit. We show that if the Calabi invariant is less than the boundary rotation number, then the infimum over periodic orbits of the mean action is less than or equal to the Calabi invariant. The proof uses a new filtration on embedded contact homology determined by a transverse knot, which may be of independent interest. (An analogue of this filtration can be defined for any other version of contact homology in three dimensions that counts holomorphic curves.)
\end{abstract}

\section{Introduction}

There is a close relation between the dynamics of Reeb flows on $S^3$ and the dynamics of area-preserving diffeomorphisms of surfaces. This relation was developed by Hofer-Wysock-Zehnder \cite{hwzannals1}, who showed that ``dynamically convex'' Reeb flows on $S^3$ admit disk-type global surfaces of section. Combining this fact with a result of Franks \cite{franks} on area-preserving homeomorphisms of the annulus, they showed that the Reeb flow on every strictly convex hypersurface in $\R^4$ has either two or infinitely many periodic orbits. This result was extended in \cite{hwzannals2} to generic star-shaped hypersurfaces in $\R^4$. More recently, Abbondandolo-Bramham-Hryniewicz-Salom\~ao \cite[Thm.\ 2]{long} constructed counterexamples to a conjecture about the systolic ratios of contact forms on $S^3$, by finding certain examples of area-preserving diffeomorphisms of the disk, and then suspending them to obtain Reeb flows on $S^3$.

The works mentioned above use facts about area-preserving maps of surfaces to obtain results about Reeb flows on $S^3$. In this paper we will proceed in the opposite direction, using information about Reeb flows on $S^3$, coming from embedded contact homology, to obtain results about area-preserving diffeomorphisms of the disk.

\subsection{Statement of the main result}

Let $D^2$ denote the closed unit disk, and let $\omega$ denote the standard area form on $D^2$, rescaled so that its total area is $1$. Let 
\[
\phi:(D^2,\omega)\longrightarrow (D^2,\omega)
\]
be an area-preserving diffeomorphism. Assume that there exists a real number $\theta_0$ so that near the boundary of $D^2$, the map $\phi$ is rotation by angle $2\pi\theta_0$. That is, in standard polar coordinates $(r,\theta)$, we have
\begin{equation}
\label{eqn:rotation}
\phi(r,\theta) = (r,\theta+2\pi\theta_0)
\end{equation}
when $r$ is sufficiently close to $1$. Let $G$ denote the set of pairs $(\phi,\theta_0)$ as above.

Let $\beta$ be a primitive of $\omega$ on $D^2$. Assume that on the boundary of $D^2$ we have
\begin{equation}
\label{eqn:betaboundary}
\beta|_{\partial D^2}(\partial_\theta) = \frac{1}{2\pi}.
\end{equation}
Given $(\phi,\theta_0)\in G$, we define the ``action function'' $f:D^2\to\R$ to be the unique function such that
\begin{equation}
\label{eqn:f}
\begin{split}
df &= \phi^*\beta-\beta,\\
f|_{\partial D^2} &= \theta_0.
\end{split}
\end{equation}
Although the function $f$ depends on $\beta$, it determines two important quantities which do not.

First, the {\em Calabi invariant\/} \cite{calabi,gg} is defined by\footnote{We use the letter $\calabi$ to denote the Calabi invariant because of the connection with contact volume in Proposition~\ref{prop:openbook} below.}
\[
\calabi(\phi,\theta_0) = \int_{D^2}f\omega.
\]
One can think of the Calabi invariant as a kind of ``rotation number'' of the map $\phi$. For example, if $\phi$ is rotation by angle $2\pi\theta_0$, i.e.\ if equation \eqref{eqn:rotation} holds for all $(r,\theta)$, then $\calabi(\phi,\theta_0) = \theta_0$. In general, the Calabi invariant is a homomorphism $G\to\R$, see e.g.\ \cite{gg}.

Second, if $\gamma=(x_1,\ldots,x_d)$ is a periodic orbit of $\phi$ of period $d$, that is if $\phi(x_i)=x_{i+1\mod d}$, and the points $x_i$ are distinct\footnote{We assume that the points $x_i$ are distinct only to avoid confusion with the notation later.}, we define the {\em total action\/}
\[
\A(\gamma) = \sum_{i=1}^df(x_i).
\]

\begin{lemma}
Fix $(\phi,\theta_0)\in G$. Then the total action $\A(\gamma)$ of a periodic orbit $\gamma$, and the Calabi invariant
$\calabi(\phi,\theta_0)$, do not depend on the primitive $\beta$ of $\omega$ satisfying \eqref{eqn:betaboundary}.
\end{lemma}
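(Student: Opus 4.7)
The plan is to show that changing the primitive $\beta$ of $\omega$ (subject to \eqref{eqn:betaboundary}) alters the action function $f$ by a coboundary-type term that drops out of both $\mathcal{A}(\gamma)$ and $\calabi(\phi,\theta_0)$.

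First I would compare two primitives. Let $\beta$ and $\beta'$ both satisfy $d\beta=d\beta'=\omega$ and the boundary condition \eqref{eqn:betaboundary}. Then $\beta'-\beta$ is a closed $1$-form on the simply connected space $D^2$, hence $\beta'-\beta=dg$ for some $g\in C^\infty(D^2)$. The boundary condition implies $(\beta'-\beta)|_{\partial D^2}(\partial_\theta)=0$, so $g$ is constant on the connected boundary $\partial D^2$; after subtracting a constant from $g$, I may assume $g|_{\partial D^2}=0$.

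Next I would compute how $f$ changes. Let $f$ and $f'$ denote the action functions associated to $\beta$ and $\beta'$ respectively via \eqref{eqn:f}. Then
\[
df'-df \;=\; \phi^*\beta'-\beta'-\phi^*\beta+\beta \;=\; \phi^*(dg)-dg \;=\; d(g\circ\phi-g).
\]
Since $\phi$ preserves $\partial D^2$ and $g$ vanishes there, $(g\circ\phi-g)|_{\partial D^2}=0$, matching the boundary condition $f'|_{\partial D^2}=f|_{\partial D^2}=\theta_0$. Therefore
\[
f' \;=\; f + g\circ\phi - g.
\]

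With this identity in hand the two invariance claims are immediate. For a periodic orbit $\gamma=(x_1,\ldots,x_d)$, the sum telescopes:
\[
\A'(\gamma)-\A(\gamma) \;=\; \sum_{i=1}^{d}\bigl(g(\phi(x_i))-g(x_i)\bigr) \;=\; \sum_{i=1}^{d}\bigl(g(x_{i+1\bmod d})-g(x_i)\bigr) \;=\; 0.
\]
For the Calabi invariant, since $\phi$ is area-preserving I get, by the change of variables formula,
\[
\calabi'(\phi,\theta_0)-\calabi(\phi,\theta_0) \;=\; \int_{D^2}(g\circ\phi-g)\,\omega \;=\; \int_{D^2}g\,\phi^*\omega - \int_{D^2}g\,\omega \;=\; 0.
\]

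There is no real obstacle here; the only subtlety worth being careful about is using the boundary condition \eqref{eqn:betaboundary} to ensure that the primitive $g$ of $\beta'-\beta$ can be chosen to vanish on $\partial D^2$, which is precisely what makes the adjustment to $f$ consistent with the prescribed boundary value $\theta_0$.
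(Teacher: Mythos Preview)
Your proof is correct and slightly different in spirit from the paper's. The paper expresses $f_\beta(x)$ as $\theta_0+\int_\eta(\phi^*\beta-\beta)$ along a path $\eta$ from $\partial D^2$ to $x$, then chooses paths $\eta_i$ to the orbit points with $\eta_{i+1}=\phi_*\eta_i$, so that the sum $\sum_i f_\beta(x_i)$ reduces to $d\theta_0+\int_{\phi_*\eta_d-\eta_1}\beta$; independence of $\beta$ then follows because $\phi_*\eta_d-\eta_1$ is homologous to a boundary arc where $\beta$ is fixed by \eqref{eqn:betaboundary}. For the Calabi invariant the paper simply cites the literature. Your approach instead compares two primitives directly, writing $\beta'-\beta=dg$ with $g|_{\partial D^2}=0$ and deducing $f'=f+g\circ\phi-g$; the telescoping and area-preservation arguments then finish both claims at once. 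Your route is a bit more self-contained (you actually prove the Calabi statement rather than outsourcing it), while the paper's path-integral formula has the mild advantage of giving an explicit $\beta$-free expression for $\A(\gamma)$. One small remark: in your change-of-variables line, the pullback appearing should strictly be $(\phi^{-1})^*\omega$ rather than $\phi^*\omega$, but since $\phi^*\omega=\omega$ this is harmless.
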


\begin{proof}
Let $f_\beta$ denote the action function determined by $\beta$.
If $x\in D^2$, we have
\begin{equation}
\label{eqn:fbeta}
f_\beta(x) = \theta_0 + \int_\eta (\phi^*\beta - \beta)
\end{equation}
where $\eta$ is any path in $D^2$ from $\partial D^2$ to $x$.

If $\gamma=(x_1,\ldots,x_d)$ is a periodic orbit, choose paths $\eta_i$ from $\partial D^2$ to $x_i$ so that $\eta_{i+1}=\phi_*\eta_i$ for $i=1,\ldots,d-1$. Then by \eqref{eqn:fbeta}, the total action of $\gamma$ determined by $\beta$ is
\[
\sum_{i=1}^df_\beta(x_i) = d\theta_0 + \int_{\phi_*\eta_d - \eta_1}\beta.
\]
This does not depend on $\beta$, because $\phi_*\eta_d-\eta_1$ is homologous to a path on the boundary of $D^2$, along which the integral of $\beta$ is determined by \eqref{eqn:betaboundary}.

Similar calculations show that $\calabi(\phi,\theta_0)$ does not depend on $\beta$, see e.g.\ \cite{gg}.
\end{proof}

Let $\mc{P}(\phi)$ denote the set of periodic orbits of $\phi$. If $\gamma=(x_1,\ldots,x_d)\in\mc{P}(\phi)$ is a periodic orbit, denote its period by $d(\gamma)=d$, and define its {\em mean action\/} to be
\[
\frac{\A(\gamma)}{d(\gamma)} = \frac{1}{d} \sum_{i=1}^df(x_i).
\]
Our main result is the following:

\begin{theorem}
\label{thm:main}
Let $\theta_0\in\R$, and let $\phi$ be an area-preserving diffeomorphism of $D^2$ which agrees with rotation by angle $2\pi\theta_0$ near the boundary. Suppose that
\begin{equation}
\label{eqn:mainass}
\calabi(\phi,\theta_0) < \theta_0.
\end{equation}
Then
\begin{equation}
\label{eqn:conclusion}
\inf\left\{\frac{\A(\gamma)}{d(\gamma)} \;\bigg|\; \gamma\in\mc{P}(\phi)\right\} \le \calabi(\phi,\theta_0).
\end{equation}
\end{theorem}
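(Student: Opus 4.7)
\medskip\noindent\textbf{Proof plan.} I would translate the theorem into a question about Reeb dynamics on $S^3$ via an open book suspension of $\phi$, then use the knot-filtered embedded contact homology announced in the abstract to extract a Reeb orbit realizing the claimed bound. The hypothesis $\calabi(\phi,\theta_0)<\theta_0$ should be viewed as saying that the interior of the disk rotates less on average than the boundary, and the challenge is to produce an interior periodic orbit that actually witnesses this deficit.

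\medskip
First, I would build a contact form $\lambda$ on $S^3$ adapted to the open book with page $D^2$ and monodromy $\phi$, constructed from $\beta$ and the action function $f$ of \eqref{eqn:f}. A standard mapping-torus computation should yield a bijection between $\gamma\in\mc{P}(\phi)$ and interior Reeb orbits of $\lambda$, under which the linking number of an interior Reeb orbit with the binding $K=\partial D^2$ equals $d(\gamma)$ and the symplectic action equals $\A(\gamma)$; the binding itself becomes a simple Reeb orbit of action $\theta_0$, and the total contact volume $\int_{S^3}\lambda\wedge d\lambda$ is determined by $\calabi(\phi,\theta_0)$ and $\theta_0$.

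\medskip
Next I would invoke the new filtration on $ECH(S^3,\lambda)$ indexed by linking number with $K$. I expect this to supply, for each nonzero ECH class at linking level $n$, a spectral invariant $c$ realized by an honest Reeb orbit set of total linking number $n$ and total action at most $c$. Applied to a natural sequence of classes $\sigma_n$ with $n\to\infty$, this produces orbit sets $\Gamma_n$ with spectral invariants $c_n$, which I would split as $\Gamma_n=m_n K\sqcup\Gamma_n'$, with $\Gamma_n'$ the interior part; then $m_n+\sum_{\gamma\in\Gamma_n'}d(\gamma)=n$ and $m_n\theta_0+\sum_{\gamma\in\Gamma_n'}\A(\gamma)\le c_n$. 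A Weyl-type asymptotic for the filtered spectral invariants, parallel to the Cristofaro-Gardiner--Hutchings--Ramos volume asymptotic for unfiltered ECH on $S^3$, should give $c_n/n\to\calabi(\phi,\theta_0)$. Combined with $\calabi<\theta_0$, this forces $\Gamma_n'\ne\emptyset$ for all large $n$ and $m_n/n\to 0$, so some $\gamma\in\Gamma_n'$ satisfies
\[
\frac{\A(\gamma)}{d(\gamma)} \;\le\; \frac{c_n-m_n\theta_0}{n-m_n} \;\longrightarrow\; \calabi(\phi,\theta_0),
\]
yielding \eqref{eqn:conclusion} in the limit.

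\medskip
The main obstacle is the construction and asymptotic analysis of the transverse-knot filtration itself. One must choose an almost complex structure making $K$ pseudoholomorphic and show that ECH curves counted by the differential cannot strictly increase linking number with $K$, then establish the refined volume law whose leading coefficient is $\calabi(\phi,\theta_0)$ rather than the total contact volume or the boundary action $\theta_0$. This is the identity in which the binding's contribution has been algebraically separated off, and it is precisely where the strict inequality $\calabi<\theta_0$ enters: without it, the asymptotic could not distinguish between the spectral invariant being saturated by binding multiplicity and the presence of genuine interior orbits.
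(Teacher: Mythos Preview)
Your broad strategy---suspend $\phi$ to a contact form on $S^3$ supported by the disk open book, then use a linking-number filtration on ECH together with a volume-type asymptotic---is exactly the paper's. But two concrete steps in your plan do not work as stated, and the paper handles them differently.

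\medskip
\textbf{The filtration and its asymptotic.} The filtration cannot be the naive integer linking number: for an orbit set $B^{m}\alpha$ (with $\alpha$ disjoint from $B$) the correct value is $\mc{F}_B(B^m\alpha)=m\cdot\op{rot}(B)+\ell(\alpha,B)$, not $m+\ell(\alpha,B)$. The weight on the binding is forced by the asymptotics of holomorphic ends at covers of $B$: a positive end of multiplicity $q$ contributes at most $\lfloor q\,\op{rot}(B)\rfloor$ to the linking of the nearby slice with $B$, so only the rotation-weighted quantity is monotone under the differential. Your equation $m_n+\sum d(\gamma)=n$ is therefore not the constraint the filtration imposes. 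More seriously, there is no ``refined volume law'' of the form $c_n/n\to\calabi$ for a filtration-indexed spectral number. The paper uses the ordinary ECH spectrum, indexed by the \emph{grading} $k$, together with the known asymptotic $c_k(S^3,\lambda)^2/(2k)\to\op{vol}(S^3,\lambda)=\calabi$, and separately the knot-filtration lower bound $\mc{F}_B(x)\ge N_k(1,\op{rot}(B))\sim\sqrt{2k\,\theta_0^{-1}}$ for any grading-$2k$ generator in a representing cycle. Dividing these gives only
\[
\frac{\A(\alpha)}{\ell(\alpha,B)}\ \lesssim\ \sqrt{\theta_0\cdot\calabi(\phi,\theta_0)},
\]
not $\calabi$ itself. (Incidentally, in the paper's construction the binding has action $1$ and rotation number $\theta_0^{-1}$, and the contact volume equals $\calabi(\phi,\theta_0)$ exactly; your normalization with binding action $\theta_0$ does not come out of the mapping-torus computation.)

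\medskip
\textbf{The missing twisting step.} Getting from $\sqrt{\theta_0\calabi}$ down to $\calabi$ is not a matter of sharpening the ECH input; it requires an additional geometric argument that your outline omits. The paper modifies $\phi$ near $\partial D^2$ by a negative twist so that the new boundary rotation number is $\calabi+\epsilon$ rather than $\theta_0$, while changing the action function and the Calabi invariant only by $O(\epsilon)$. Applying the $\sqrt{\theta_0\calabi}$ bound to this twisted map yields $\sqrt{(\calabi+\epsilon)(\calabi+O(\epsilon))}$, and one checks that periodic orbits in the twisted annulus have mean action at least $\calabi+\epsilon$ and hence cannot realize the infimum, so the orbit produced lies in the untwisted region and gives a bound on the original mean action. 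Letting $\epsilon\to 0$ yields \eqref{eqn:conclusion}. Without this step your argument stalls at $\sqrt{\theta_0\calabi}$, which is strictly larger than $\calabi$ whenever the hypothesis $\calabi<\theta_0$ is strict.
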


The conclusion \eqref{eqn:conclusion} can be restated as follows: for every $\epsilon>0$, there exists a periodic orbit $\gamma$ such that the average of $f$ over $\gamma$ is less than $\epsilon$ plus the average of $f$ over $D^2$.

\begin{remark}
The conclusion of Theorem~\ref{thm:main} does not necessarily hold if one takes the infimum over fixed points instead of periodic orbits. One can make a simple counterexample as follows, cf.\ \cite[\S2]{long}: Start with a rotation by angle $\pi$, with $\theta_0=1/2$. This has Calabi invariant $1/2$, and the only fixed point is the origin, which has action $1/2$. Now compose with a negative twist supported in a subdisk contained in the interior of one half of $D^2$; this will decrease the Calabi invariant without creating any new fixed points.
\end{remark}

\begin{remark}
We do not know how to prove\footnote{It is natural to try to prove this using arguments similar to those in \S\ref{sec:input}. To start, there is a straightforward modification of Proposition~\ref{prop:input} in which the inequalities are essentially switched. However the calculations at the end of \S\ref{sec:input} then do not work out.} the reverse inequality in Theorem~\ref{thm:main}, namely that the supremum of the mean action is greater than or equal to the Calabi invariant assuming \eqref{eqn:mainass}.  However the reverse inequality is arguably less interesting; for example it trivially holds when $\theta_0$ is rational, as then every point near the boundary of $D^2$ is on a periodic orbit with mean action equal to $\theta_0$.
\end{remark}

Replacing $(\phi,\theta_0)$ by $(\phi^{-1},-\theta_0)$ in Theorem~\ref{thm:main} immediately gives:

\begin{corollary}
Let $\theta_0\in\R$, and let $\phi$ be an area-preserving diffeomorphism of $D^2$ which agrees with rotation by angle $2\pi\theta_0$ near the boundary. Suppose that
\[
\calabi(\phi,\theta_0) > \theta_0.
\]
Then
\[
\sup\left\{\frac{\A(\gamma)}{d(\gamma)} \;\bigg|\; \gamma\in\mc{P}(\phi)\right\} \ge \calabi(\phi,\theta_0).
\]
\end{corollary}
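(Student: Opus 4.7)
The plan is to derive this corollary directly from Theorem~\ref{thm:main} applied to $(\phi^{-1},-\theta_0)$. First I would check that this pair lies in $G$: since $\phi$ equals rotation by $2\pi\theta_0$ near $\partial D^2$, $\phi^{-1}$ equals rotation by $-2\pi\theta_0$ there, so $(\phi^{-1},-\theta_0)\in G$. Because the Calabi invariant is a homomorphism $G\to\R$, we have $\calabi(\phi^{-1},-\theta_0)=-\calabi(\phi,\theta_0)$. Thus the hypothesis $\calabi(\phi,\theta_0)>\theta_0$ translates into $\calabi(\phi^{-1},-\theta_0)<-\theta_0$, which is exactly condition \eqref{eqn:mainass} for the pair $(\phi^{-1},-\theta_0)$.

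Next I would identify how the action function and total action transform under $\phi\mapsto\phi^{-1}$. Let $f$ be the action function for $(\phi,\theta_0)$ and $g$ the action function for $(\phi^{-1},-\theta_0)$, both defined using the same primitive $\beta$. Applying $(\phi^{-1})^*$ to $\phi^*\beta-\beta=df$ gives $(\phi^{-1})^*\beta-\beta=-d(f\circ\phi^{-1})$, and the boundary condition $g|_{\partial D^2}=-\theta_0$ matches $-f\circ\phi^{-1}|_{\partial D^2}=-\theta_0$, so $g=-f\circ\phi^{-1}$. For a periodic orbit $\gamma=(x_1,\ldots,x_d)$ of $\phi$, the corresponding orbit $\gamma^{-1}=(x_1,x_d,\ldots,x_2)$ of $\phi^{-1}$ has total action
\[
\sum_{i=1}^d g(x_i)=-\sum_{i=1}^d f(\phi^{-1}(x_i))=-\sum_{i=1}^d f(x_i)=-\A(\gamma),
\]
so its mean action with respect to $\phi^{-1}$ is the negative of the mean action of $\gamma$ with respect to $\phi$. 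The correspondence $\gamma\leftrightarrow\gamma^{-1}$ is a bijection $\mc{P}(\phi)\to\mc{P}(\phi^{-1})$.

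Finally, applying Theorem~\ref{thm:main} to $(\phi^{-1},-\theta_0)$ yields
\[
\inf\left\{\frac{\A(\gamma^{-1})}{d(\gamma^{-1})}\;\bigg|\;\gamma^{-1}\in\mc{P}(\phi^{-1})\right\}\le \calabi(\phi^{-1},-\theta_0)=-\calabi(\phi,\theta_0).
\]
Using the bijection above and the sign change in mean action, the left-hand side equals $-\sup\{\A(\gamma)/d(\gamma)\mid \gamma\in\mc{P}(\phi)\}$, and negating both sides gives the desired inequality. There is no real obstacle here; the entire content of the corollary is the sign-reversal bookkeeping, and the substantive work is already contained in Theorem~\ref{thm:main}.
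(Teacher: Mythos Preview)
Your proof is correct and follows exactly the approach the paper takes: the paper simply states that replacing $(\phi,\theta_0)$ by $(\phi^{-1},-\theta_0)$ in Theorem~\ref{thm:main} immediately gives the corollary. You have written out the sign-reversal bookkeeping that the paper leaves implicit, and all of your computations (in particular $g=-f\circ\phi^{-1}$ and the resulting negation of mean action) are correct.
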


\paragraph{Outline.} The rest of this paper is arranged in order of increasing mathematical prerequisites. In \S\ref{sec:reduction} we use elementary constructions to reduce Theorem~\ref{thm:main} to a statement about Reeb flows on $S^3$ (Proposition~\ref{prop:reduced}). In \S\ref{sec:input} we prove Proposition~\ref{prop:reduced} using input from embedded contact homology (Proposition~\ref{prop:input}). This input has two parts which, roughly speaking, give an upper bound on $\A(\gamma)$ and a lower bound on $d(\gamma)$ in \eqref{eqn:conclusion}. The upper bound on action comes from a relation between the ECH spectrum and contact volume\footnote{Egor Shelukhin has suggested to the author that to prove Theorem~\ref{thm:main}, instead of using the relation between volume and the asymptotics of ECH capacities for contact forms on $S^3$, one might alternatively use the relation between the Calabi invariant and the Entov-Polterovich quasimorphism \cite[Prop.\ 3.3]{ep} for Hamiltonian diffeomorphisms of $S^2$.} which is reviewed in \S\ref{sec:review}. The lower bound on the period comes from a new filtration on embedded contact homology determined by a transverse knot, which is introduced in \S\ref{sec:filtration}. Proposition~\ref{prop:input} is then proved at the end of \S\ref{sec:filtration}. This proof uses topological invariance of the knot filtration, which is proved in \S\ref{sec:functoriality}, following some technical preliminaries on ECH cobordism maps in \S\ref{sec:cobordism}.

\paragraph{Acknowledgments.} We thank Umberto Hryniewicz for suggesting that something like Theorem~\ref{thm:main} should be true, and Vinicius Ramos for related helpful discussions.

\section{Reduction to contact geometry}
\label{sec:reduction}

To prove Theorem~\ref{thm:main}, we will reduce to a statement about contact geometry. 

\subsection{Preliminary definitions}

The contact geometry statement that we will need uses the following definitions, all of which are standard except possibly for the ``rotation number'' \eqref{eqn:rn} below.

Let $Y$ be a closed oriented three-manifold. Recall that a {\em contact form\/} on $Y$ is a $1$-form $\lambda$ such that $\lambda\wedge d\lambda>0$ everywhere. We define the {\em volume\/}
\[
\op{vol}(Y,\lambda) = \int_Y\lambda\wedge d\lambda.
\]

A contact form $\lambda$ determines the {\em Reeb vector field\/} $R$ characterized by $d\lambda(R,\cdot)=0$ and $\lambda(R)=1$, and the {\em contact structure\/} $\xi=\Ker(\lambda)\subset TY$.

A {\em Reeb orbit\/} is a smooth map $\gamma:\R/T\Z\to Y$ for some $T>0$, modulo reparametrization, such that $\gamma'(t)=R(\gamma(t))$. The {\em symplectic action\/} of $\gamma$ is defined by
\[
\A(\gamma) = \int_\gamma\lambda.
\]
This is the same as the above period $T$. We say that the Reeb orbit $\gamma$ is {\em simple\/} if the map $\gamma$ is an embedding. We let $\mc{P}(\gamma)$ denote the set of simple Reeb orbits of $\gamma$.

If $\gamma$ is a Reeb orbit, then the linearization of the time $T$ Reeb flow along $\gamma$ determines a symplectic linear map
\begin{equation}
\label{eqn:lrm}
P_\gamma: (\xi_{\gamma(0)},d\lambda) \to (\xi_{\gamma(0)},d\lambda).
\end{equation}
We say that $\gamma$ is {\em elliptic\/} if $P_\gamma$ is a rotation for some symplectic identification $\xi_{\gamma(0)}\simeq \R^2$. (When the eigenvalues of $P_\gamma$ are not both $1$ or $-1$, an equivalent condition is that the eigenvalues of $P_\gamma$ are on the unit circle.)

If $H_1(Y)=0$, then every Reeb orbit $\gamma$ has a well-defined {\em rotation number\/} $\op{rot}(\gamma)\in\R$ defined as follows. Consider a symplectic trivialization $\tau: \xi|_\gamma \simeq \gamma\times \R^2$ such that a pushoff of $\gamma$ via this trivialization has linking number zero\footnote{When defining the Conley-Zehnder index of a nondegenerate Reeb orbit, it is common to use a different homotopy class of trivialization $\tau$, chosen so that it extends to a trivialization of $\xi$ over a surface bounded by $\gamma$. These two trivializations differ by the self-linking number of the transverse knot $\gamma$, see \S\ref{sec:review}.} with $\gamma$. If $\gamma$ is elliptic, then we can choose the trivialization $\tau$ so that for each $t\in[0,T]$, the linearization of the time $t$ Reeb flow $\xi_{\gamma(0)}\to \xi_{\gamma(t)}$ is rotation by angle $2\pi\theta_t$, where $\theta:[0,T]\to\R$ is continuous and $\theta_0=0$. We then define
\begin{equation}
\label{eqn:rn}
\op{rot}(\gamma) = \theta_T.
\end{equation}
If $\gamma$ is not elliptic, then $P_\gamma$ has real eigenvalues, possibly both equal to $\pm1$, which are both positive or both negative. In this case we define $\op{rot}(\gamma)$ to be the number of times that the linearized Reeb flow rotates an eigenvector of $P_\gamma$ with respect to the trivialization $\tau$ as one flows around $\gamma$. This is an integer when $P_\gamma$ has positive eigenvalues, and a half-integer when $P_\gamma$ has negative eigenvalues.

Recall that an {\em open book decomposition\/} of a closed oriented three-manifold $Y$ consists of a compact oriented surface $\Sigma$ with boundary, a diffeomorphism $\psi:\Sigma\to\Sigma$ which is the identity near the boundary, and an orientation-preserving diffeomorphism
\[
Y \simeq \frac{[0,1]\times \Sigma}{\sim}
\]
where on the right hand side we identify
\[
(1,x)\sim(0,\psi(x)), \quad x\in\Sigma
\]
and
\[
(\tau,x)\sim (\tau',x),\quad x\in\partial\Sigma,\;\tau,\tau'\in[0,1].
\] 
The image in $Y$ of a set $\{\tau\}\times \op{int}(\Sigma)$ is called a {\em page\/} of the open book. The image in $Y$ of the set $[0,1]\times\partial\Sigma$ is called the {\em binding\/} of the open book. A contact form $\lambda$ is {\em compatible\/} with an open book decomposition of $Y$ if the Reeb vector field is positively transverse to the pages, and tangent to the binding. Then each component of the binding is a (simple) Reeb orbit.

\subsection{From a disk map to an open book}

We can now state the following proposition, which will allow us to reduce Theorem~\ref{thm:main} to a statement about contact forms on $S^3$. (This is similar to \cite[Prop.\ 3.2]{long}. See \cite[\S6]{cgh} for some related constructions.) Below, we fix the primitive $\beta$ of $\omega$ to be
\[
\beta = \frac{r^2}{2\pi}d\theta.
\]

\begin{proposition}
\label{prop:openbook}
Let $\theta_0\in\R$, and let $\phi$ be an area-preserving diffeomorphism of $D^2$ which is rotation by angle $2\pi\theta_0$ near the boundary. Suppose that the action function $f$ defined in \eqref{eqn:f} is everywhere positive. Then there exists a contact form $\lambda$ on $S^3$, which is compatible with an open book decomposition in which the pages are disks, with the following properties:
\begin{description}
\item{(a)}
The binding orbit $B$ has action $1$ and is elliptic with rotation number $\theta_0^{-1}$.
\item{(b)}
There is a bijection $\mc{P}(\gamma) = \{B\}\cup \mc{P}(\phi)$, such that if $\gamma\in\mc{P}(\phi)$, then the corresponding Reeb orbit of $\lambda$ has linking number $d(\gamma)$ with the binding, and symplectic action equal to the total action of $\gamma$.
\item{(c)}
$\op{vol}(S^3,\lambda) = \calabi(\phi,\theta_0)$.
\end{description}
\end{proposition}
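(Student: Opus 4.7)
The plan is to construct $\lambda$ explicitly by placing a contact form on the mapping torus of $\phi$ whose Reeb flow suspends $\phi$, and then extending smoothly across the boundary torus to realize the binding orbit $B$ with the prescribed action and rotation number. The construction is a direct adaptation of \cite[Prop.~3.2]{long}.

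On $[0,1]_s \times D^2$, choose a smooth $\omega$-preserving isotopy $\{\phi_s\}$ from $\mathrm{id}$ to $\phi$, constant near $s=0$ and $s=1$ and equal to the rotation by $2\pi s\theta_0$ in a collar of $\partial D^2$. Writing $\phi_s^*\beta - \beta = dh_s$ with $h_0 \equiv 0$, $h_1 = f$, and $h_s|_{\partial D^2} = s\theta_0$, I would define
\[
\tilde\lambda = (\partial_s h_s)(x)\, ds + \phi_s^*\beta.
\]
A direct calculation, using $\partial_s(\phi_s^*\beta) = d(\partial_s h_s)$ (obtained by $s$-differentiating $\phi_s^*\beta - \beta = dh_s$), shows that the $ds \wedge d_x(\partial_s h_s)$ contributions from $d((\partial_s h_s)\,ds)$ and $d(\phi_s^*\beta)$ cancel, leaving $d\tilde\lambda = \omega$ on each page and hence $\tilde\lambda \wedge d\tilde\lambda = (\partial_s h_s)\, ds \wedge \omega$: a positive volume form provided $\partial_s h_s > 0$, which can be arranged from the hypothesis $f > 0$ by a suitable choice of Hamiltonian driving $\phi_s$. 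The Reeb field is then $R = (\partial_s h_s)^{-1}\partial_s$, whose flow is vertical in $s$ and whose first return map to each page equals $\phi$. Near $\partial D^2$, where $\phi_s$ is rotation and $h_s = s\theta_0$ is constant in $x$, the form reduces to $\tilde\lambda = \theta_0\, ds + (r^2/(2\pi))\, d\theta$.

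To realize the binding, I would deform $\tilde\lambda$ in a thin collar $r \in [1-\epsilon, 1]$ to a standard tubular model $\lambda_B = A(r)\, ds + B(r)\, d\theta$ with profile functions satisfying: $A(1-\epsilon) = \theta_0$, $B(1-\epsilon) = (1-\epsilon)^2/(2\pi)$ (matching with $\tilde\lambda$); $A(1) = 0$, $B(1) = 1/(2\pi)$, $B'(1) = 0$ (smoothness at $r=1$ with the binding $B = \partial D^2$ appearing as a Reeb orbit tangent to $\partial_\theta$); and $AB' - BA' > 0$ (contact condition). With these choices, $B$ becomes an elliptic Reeb orbit of action $\int_B \lambda = B(1) \cdot 2\pi = 1$. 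Computing the linearized Reeb flow on the transverse plane and comparing with the zero-linking trivialization of $\xi$ along $B$ (determined by the $\theta_0$-twisted Heegaard splitting of $S^3$) gives rotation number $\theta_0^{-1}$.

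Properties (a)--(c) then follow. (a) is built into the binding model. (b) holds because Reeb orbits in $S^3 \setminus B$ correspond bijectively to periodic orbits of $\phi$ via the vertical suspension, with linking number $d(\gamma)$ equal to the number of page-crossings and symplectic action
\[
\int_\gamma \lambda = \sum_{i=1}^{d} \int_0^1 (\partial_s h_s)(x_i)\, ds = \sum_{i=1}^{d} f(x_i) = \A(\gamma).
\]
For (c), the volume decomposes as
\[
\op{vol}(S^3, \lambda) = \int_{M_\phi \setminus \mathrm{collar}} \tilde\lambda\wedge d\tilde\lambda + \int_{\mathrm{collar}} \lambda_B \wedge d\lambda_B = \int_{D^2} f\, \omega = \calabi(\phi, \theta_0),
\]
where the two parts combine to yield $\calabi$ after a Stokes-type cancellation involving the boundary integral $\theta_0 \cdot \int_{\partial D^2}\beta = \theta_0$. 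The main obstacle is the simultaneous matching at the gluing interface: ensuring $\lambda$ is globally smooth and contact, that the binding has exactly the prescribed action $1$ and rotation number $\theta_0^{-1}$, that the first return map on pages remains $\phi$, and that the volume balances to exactly $\calabi(\phi,\theta_0)$. The rotation number calculation in particular requires careful tracking of the zero-linking trivialization of $\xi$ along $B$, which is the subtle algebraic-topological ingredient determined by the $\theta_0$-twist in the Heegaard splitting of $S^3$.
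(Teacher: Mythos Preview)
Your overall strategy---build a contact form on the mapping torus whose Reeb flow suspends $\phi$, then extend over the binding---matches the paper's. But there are two genuine gaps in the execution.

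First, the conditions you impose on $\{\phi_s\}$ are inconsistent: an isotopy that is constant in $s$ near $s=0$ cannot also equal rotation by $2\pi s\theta_0$ near $\partial D^2$ for $s\in(0,\epsilon)$. More substantively, even with a consistent setup, arranging $\partial_s h_s>0$ everywhere while making $\tilde\lambda$ descend smoothly to the mapping torus is not automatic from $f>0$; you have asserted it without justification. The paper sidesteps isotopies entirely and writes the form directly as $\lambda_0=F\,dt+\beta(t)$, with $F=(1-\eta')f+\eta'\,\phi^*f$ and $\beta(t)=\beta+(t-\eta)\,df+\eta\,\phi^*df$ for an auxiliary function $\eta:[0,1]\to\R$ satisfying $\eta\equiv 0$ near $0$, $\eta(t)=t-1$ near $1$, and $-\epsilon<\eta'\le 1$. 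One then checks $d\lambda_0=\omega$ and $F>0$ directly, so the contact condition and the descent to the mapping torus are transparent.

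Second, and this is the main problem, your tubular model $\lambda_B=A(r)\,ds+B(r)\,d\theta$ with $A(1)=0$, $B(1)=1/(2\pi)$ does \emph{not} extend smoothly over the binding. The gluing of the solid torus neighborhood of $B$ to the mapping torus boundary is twisted by $\theta_0$: the smooth solid-torus coordinates $(\tau,\rho,\psi)$ are related to $(s,r,\theta)$ by $\psi=2\pi s$, $\rho^2=1-r^2$, and $\theta=2\pi\tau-\theta_0\psi$. Rewriting your form in these coordinates, the coefficient of $d\psi$ at $\rho=0$ is $A(1)/(2\pi)-B(1)\theta_0=-\theta_0/(2\pi)\neq 0$, and since $d\psi$ is singular at $\rho=0$ the form cannot be smooth there. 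The paper's key observation is that \emph{no collar deformation is needed at all}: the form $\theta_0\,dt+(r^2/2\pi)\,d\theta$ that $\lambda_0$ already equals near $\partial D^2$ becomes, under this coordinate change, $(\theta_0/2\pi)\rho^2\,d\psi+(1-\rho^2)\,d\tau$, which is manifestly smooth and contact on the whole solid torus. Because nothing is deformed, the Reeb dynamics away from $B$, the action and rotation number of $B$, and the volume all come out exactly as claimed---there is no ``Stokes-type cancellation'' to engineer.
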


\begin{proof}
Let $Y$ denote the mapping torus of $\phi$, namely
\[
\begin{split}
Y &= [0,1]\times D^2/\sim,\\
(1,x) &\sim (0,\phi(x)).
\end{split}
\]
To prove the proposition we proceed in three steps.

{\em Step 1.\/} We first construct a $1$-form $\lambda_0$ on $[0,1]\times D^2$ with the following properties:
\begin{description}
\item{(i)} Near $[0,1]\times\partial D^2$ we have
\begin{equation}
\label{eqn:lb}
\lambda_0 = \theta_0dt+\frac{r^2}{2\pi}d\theta
\end{equation}
where $t$ denotes the $[0,1]$ coordinate.
\item{(ii)} $\lambda_0$ descends to a smooth $1$-form on the mapping torus $Y$.
\item{(iii)} $\lambda_0$ is a contact form.
\item{(iv)} The Reeb vector field of $\lambda_0$ is a positive multiple of $\partial_t$.
\item{(v)} The time for the Reeb flow to move from $(0,x)$ to $(1,x)$ equals $f(x)$.
\item{(vi)} $\op{vol}([0,1]\times D^2,\lambda_0) = \calabi(\phi,\theta_0)$.
\end{description}

To construct $\lambda_0$ with the above properties, choose $\epsilon\in(0,\min(f)/\max(f))$. Let $\eta:[0,1]\to\R$ be a smooth function with the following properties:
\begin{itemize}
\item $\eta(t)=0$ for $t$ close to $0$.
\item $\eta(t)=t-1$ for $t$ close to $1$.
\item $-\epsilon<\eta'(t)\le 1$ for all $t$.
\end{itemize}
Now define
\begin{equation}
\label{eqn:fdtb}
\lambda_0 = Fdt + \beta(t)
\end{equation}
where $F:[0,1]\times D^2\to \R$ is defined by
\[
F(t,x) = (1-\eta'(t))f(x) + \eta'(t)\phi^*f(x),
\]
and $\beta(t)$ is a $1$-form on $D^2$ for each $t\in[0,1]$ defined by
\[
\beta(t) =  \beta + (t-\eta(t))df + \eta(t)\phi^*df.
\]

Property (i) is immediate from the above definition and the fact that $f\equiv\phi^*f\equiv\theta_0$ near $\partial D^2$.

The conditions on $\eta$ imply that
\[
\lambda_0 = fdt + \beta + t(\phi^*\beta - \beta)
\]
for $t$ close to $0$, and
\[
\lambda_0 = \phi^*fdt + \phi^*\beta + (t-1)\phi^*(\phi^*\beta -\beta)
\]
for $t$ close to $1$. This implies property (ii).

To prove property (iii), we observe that
\[
\frac{d\beta(t)}{dt} = dF(t,\cdot),
\]
and therefore
\begin{equation}
\label{eqn:dlo}
d\lambda_0 = d\beta(t)=\omega.
\end{equation}
Thus
\begin{equation}
\label{eqn:fdto}
\lambda_0\wedge d\lambda_0 = Fdt\wedge \omega.
\end{equation}
Our assumptions on $\eta$ and $\epsilon$ imply that $F$ is everywhere positive, so $\lambda_0$ is a contact form.

It follows from \eqref{eqn:fdtb} and \eqref{eqn:dlo} that the Reeb vector field associated to $\lambda_0$ is given by
\[
R_0=F^{-1}\partial_t.
\]
The time to flow from $(0,x)$ to $(1,x)$ is then
\begin{equation}
\label{eqn:iff}
\int_{0}^1F(t,x)dt = f(x).
\end{equation}
This proves properties (iv) and (v).

Property (vi) follows from equations \eqref{eqn:fdto} and \eqref{eqn:iff}.

{\em Step 2.\/} We now define a contact form $\lambda$ on $S^3$ such that:
\begin{itemize}
\item $\lambda$ is compatible with an open book decomposition in which the pages are disks.
\item The complement of the binding can be identified with the interior of the mapping torus $Y$ so that the pages are identified with sets $\{t\}\times\op{int}(D^2)$, and $\lambda$ is identified with $\lambda_0$.
\end{itemize}

Choose $\epsilon>0$ sufficiently small so that $\phi$ is rotation by angle $\phi_0$ in the region where $r^2\ge 1-\epsilon^2$. Let $D^2(\epsilon)$ denote the disk of radius $\epsilon$.
Let $Z=S^1\times D^2(\epsilon)$. We denote the $S^1$ coordinate on $Z$ by $\tau\in\R/\Z$, and on the $D^2(\epsilon)$ factor we use polar coordinates $(\rho,\psi)$ where $\rho\in[0,\epsilon]$ and $\psi\in\R/2\pi\Z$.

Let
\[
M = \frac{\op{int}(Y) \sqcup Z}{\sim}
\]
where the equivalence relation $\sim$ identifies the subset of $Z$ where $\rho>0$, with the subset of $\op{int}(Y)$ where $r^2\ge 1-\epsilon^2$, by the equations
\begin{equation}
\label{eqn:ident}
\begin{split}
r^2+\rho^2 &= 1,\\
\psi &= 2\pi t,\\
\theta &= 2\pi\tau - \theta_0\psi,\quad\quad 0\le \psi < 2\pi.
\end{split}
\end{equation}
Observe that $M$ is diffeomorphic to $S^3$ and has an open book decomposition in which the pages are the disks $\{t\}\times \op{int}(D^2)$ in $Y$, and the binding is the circle $S^1\times\{0\}$ in $Z$.

Now we claim that the contact form $\lambda_0$ on $Y$ extends to a smooth contact form $\lambda$ on $M$. To see this, on the subset of $Z$ where $\rho>0$, by equations \eqref{eqn:lb} and \eqref{eqn:ident} we can rewrite $\lambda_0$ in the coordinates $\tau,\rho,\psi$ as
\[
\lambda_0 = \frac{\theta_0}{2\pi}\rho^2d\psi + (1-\rho^2)d\tau.
\]
To see that this extends smoothly over the circle $\rho=0$, introduce new coordinates $x,y$ on $Z$ defined by $x=\rho\cos\psi$ and $y=\rho\sin\psi$. Then
\[
\lambda_0 = \frac{\theta_0}{2\pi}(xdy-ydx) + (1-x^2-y^2)d\tau.
\]
This extends smoothly over the circle $\rho=0$, and we denote the extension by $\lambda$.
We then compute that
\[
\lambda\wedge d\lambda = d\tau dx dy,
\]
so $\lambda$ is a contact form. 

We now check that $\lambda$ is compatible with the open book decomposition. It follows from part (iv) of Step 1 that the Reeb vector field $R$ of $\lambda$ is positively transverse to the pages. To see that the binding is a Reeb orbit, we compute that on $Z$ the Reeb vector field is given by
\begin{equation}
\label{eqn:bz}
R = 2\pi\theta_0^{-1}\partial_\psi + \partial_\tau
\end{equation}
which agrees with $\partial_\tau$ on the binding.

{\em Step 3.\/} We now show that the contact form $\lambda$ on $M\simeq S^3$ satisfies the conclusions (a)--(c) of the proposition.

Assertion (a) follows from equation \eqref{eqn:bz}.

The bijection $\mc{P}(\gamma) = \{B\}\cup \mc{P}(\phi)$ relating linking number to period follows from part (iv) of Step 1. Part (v) of Step 1 implies that symplectic action corresponds to total action under this bijection. This proves assertion (b).

Assertion (c) follows from part (vi) of Step 1.
\end{proof}

\subsection{Proof of the main theorem using contact geometry}
\label{sec:twist}

We now use Proposition~\ref{prop:openbook} to reduce Theorem~\ref{thm:main} to the following:

\begin{proposition}
\label{prop:reduced}
Let $\lambda$ be a contact form on $S^3$ with volume $V$. Suppose that $\lambda$ is compatible with an open book decomposition in which the pages are disks, and the binding $B$ has symplectic action $1$. Suppose that the binding is elliptic with irrational rotation number $\theta_0^{-1}$, and suppose that
\[
V < \theta_0.
\]
Then
\[
\inf\left\{\frac{\A(\gamma)}{\ell(\gamma,B)}\;\bigg|\; \gamma\in\mc{P}(\gamma)\setminus\{B\}\right\} \le \sqrt{\theta_0V}.
\]
\end{proposition}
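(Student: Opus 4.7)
The plan is to combine volume asymptotics for the ECH spectrum of $(S^3,\lambda)$ with a lower bound on linking with $B$ coming from the knot filtration on ECH, and then use a pigeonhole argument to extract a single simple Reeb orbit.

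First, I would reduce to the case that $\lambda$ is nondegenerate, by approximating $\lambda$ in $C^\infty$ by nondegenerate contact forms $\lambda_n$ still compatible with the same disk-page open book, with $B$ still an elliptic binding orbit of action $1$, and with volumes $V_n\to V$ and binding rotation numbers $\theta_{0,n}^{-1}\to\theta_0^{-1}$. If for each large $n$ there is a Reeb orbit of $\lambda_n$ distinct from $B$ with mean action at most $\sqrt{\theta_{0,n}V_n}+o(1)$, then an Arzel\`a--Ascoli compactness argument applied to orbits of uniformly bounded period yields a Reeb orbit of $\lambda$ with mean action at most $\sqrt{\theta_0 V}$. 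The hypothesis $V<\theta_0$ enters here to guarantee $\sqrt{\theta_0 V}<\theta_0$, which prevents the limit orbit from degenerating to a cover of $B$.

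Second, assuming nondegeneracy, for each sufficiently large integer $k$ I would invoke Proposition~\ref{prop:input}, which has two parts. The first part uses the Cristofaro-Gardiner--Hutchings--Ramos volume asymptotic $c_k(S^3,\lambda)^2/k \to 2V$ for the ECH spectrum, and supplies an ECH cycle $\alpha_k=\{(\gamma_i,m_i)\}$ of grading $2k$, representing a nonzero class in $\op{ECH}_{2k}(S^3,\lambda)\cong\Z$, with total symplectic action
\[
\sum_i m_i\A(\gamma_i) \;\le\; \sqrt{2Vk}+o(\sqrt{k}).
\]
The second part uses the new knot filtration on ECH determined by $B$ (developed in \S\ref{sec:filtration}, with topological invariance proved in \S\ref{sec:functoriality}) to arrange that $\alpha_k$ has total linking number
\[
\sum_i m_i\ell(\gamma_i,B) \;\ge\; \sqrt{2k/\theta_0}-o(\sqrt{k}),
\]
with the convention that covers of $B$ contribute zero to this linking sum. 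Heuristically, the linking bound reflects a Weyl-type principle for the subcomplex of ECH at each linking level, with the rotation number $\theta_0^{-1}$ of $B$ controlling the quantization of linking number against grading.

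Third, combining these two bounds, since $B$-covers contribute zero linking, the linking sum is supported on terms with $\gamma_i\ne B$, and the corresponding restricted action sum is still bounded above by $\sqrt{2Vk}+o(\sqrt{k})$. By pigeonhole, some $\gamma_i\ne B$ in $\alpha_k$ satisfies
\[
\frac{\A(\gamma_i)}{\ell(\gamma_i,B)} \;\le\; \frac{\sum_{\gamma_i\ne B} m_i\A(\gamma_i)}{\sum_{\gamma_i\ne B} m_i\ell(\gamma_i,B)} \;\le\; \frac{\sqrt{2Vk}+o(\sqrt{k})}{\sqrt{2k/\theta_0}-o(\sqrt{k})}.
\]
Letting $k\to\infty$, the right side tends to $\sqrt{V\theta_0}$, yielding the desired infimum bound.

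The principal obstacle is the linking lower bound in the second step: the volume asymptotic for $c_k$ is existing machinery, but the $\sqrt{2k/\theta_0}$ quantization of linking for cycles representing nontrivial high-grading classes is new, and its construction occupies most of the later sections. The expected strategy is to identify the knot-filtered ECH near $B$ with a standard model whose spectral invariants obey a Weyl-type asymptotic governed by $\theta_0$; establishing the topological invariance of the filtration under isotopy and its compatibility with the ordinary ECH volume asymptotic is the technical heart of the proof, to which sections~\ref{sec:cobordism}--\ref{sec:functoriality} are devoted.
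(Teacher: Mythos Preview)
Your overall strategy---combine the ECH volume asymptotic with the knot-filtration lower bound from Proposition~\ref{prop:input}, then extract a single simple orbit by pigeonhole---matches the paper's. The gap is in your second and third steps, where you assert that ``covers of $B$ contribute zero to this linking sum.'' They do not: the knot filtration is defined by $\mc{F}_B(B^m\alpha)=m\,\op{rot}(B)+\ell(\alpha,B)$, so Proposition~\ref{prop:input} yields
\[
\ell(\alpha,B)+m\theta_0^{-1}\;\ge\;N_k(1,\theta_0^{-1})\sim\sqrt{2k\theta_0^{-1}},
\]
not a lower bound on $\ell(\alpha,B)$ alone. Your pigeonhole, which drops the $B$-part from both sums on the grounds that it contributes nothing to the denominator, is therefore unjustified: the $m\theta_0^{-1}$ term may carry most of the linking bound, while the matching action contribution $m\cdot\A(B)=m$ has been discarded from the numerator.

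This is precisely where the hypothesis $V<\theta_0$ does its real work, not merely in the compactness step you describe. The paper subtracts $m$ from the action bound and $m\theta_0^{-1}$ from the linking bound and analyzes
\[
\frac{\A(\alpha)}{\ell(\alpha,B)}\;\le\;\theta_0\cdot\frac{\sqrt{2k(V+\epsilon)}-m}{\sqrt{2k\theta_0-c\theta_0^2k^{1/2}}-m}
\]
as a function of the nonnegative integer $m$, observing that it is maximized at $m=0$ exactly because $V+\epsilon<\theta_0$. Equivalently---and closer to your intended pigeonhole---one may include $B$ among the candidate orbits with ``ratio'' $\A(B)/\op{rot}(B)=\theta_0$; since the total ratio is asymptotically at most $\sqrt{\theta_0(V+\epsilon)}<\theta_0$, the minimizing term cannot be $B$. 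Either way, the step you elided is the one that consumes the hypothesis.
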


Here $\ell(\gamma,B)$ denotes the linking number of the simple Reeb orbit $\gamma$ with the binding $B$.

\begin{proof}[Proof of Theorem~\ref{thm:main} (assuming Proposition~\ref{prop:reduced}).]
We can assume without loss of generality that $f$ is everywhere positive, because given an arbitrary $(\phi,\theta_0)\in G$, we can make $f$ positive by adding a large positive integer $n$ to $\theta_0$. This will not affect the validity of Theorem~\ref{thm:main} since both the Calabi invariant and the mean action of every periodic orbit will be increased by $n$.

We now proceed in three steps.

{\em Step 1.\/} We first restrict to the case where $\theta_0$ is irrational and prove the weaker inequality
\begin{equation}
\label{eqn:weaker}
\op{inf}\left\{\frac{\A(\gamma)}{d(\gamma)} \;\bigg|\; \gamma\in\mc{P}(\phi)\right\} \le \sqrt{\theta_0\cdot\calabi(\phi,\theta_0)}.
\end{equation}
Since $f>0$, we obtain a contact form $\lambda$ on $S^3$ from Proposition~\ref{prop:openbook}. Applying Proposition~\ref{prop:reduced} to this contact form immediately gives \eqref{eqn:weaker}.

{\em Step 2.\/} Continuing to restrict to the case where $\theta_0$ is irrational, we now deduce \eqref{eqn:conclusion} from the weaker result of Step 1 by twisting the map $\phi$ near the boundary to make the boundary rotation number closer to the Calabi invariant. 

To start, choose $\delta>0$ so that $\phi$ is rotation by $2\pi\theta_0$ in the region where $r\ge 1-\delta$. Write $V=\calabi(\phi,\theta_0)$ and pick $\epsilon>0$ with $V+\epsilon<\theta_0$.  Choose a smooth nonincreasing function
\[
\psi:[1-\delta,1] \longrightarrow [V+\epsilon,\theta_0]
\]
such that $\psi(r)=\theta_0$ for $r$ close to $1-\delta$ and $\psi(r)=V+\epsilon$ for $r$ close to $1$. Define a new area-preserving diffeomorphism
\[
\widehat{\phi}:D^2\longrightarrow D^2
\]
in polar coordinates by
\[
\widehat{\phi}(r,\theta) = \left\{\begin{array}{cl} \phi(r,\theta), & r\le 1-\delta,\\
(r,\theta+2\pi\psi(r)), & r\ge 1-\delta.\end{array}\right.
\]
We want to apply Step 1 to the pair $(\widehat{\phi},V+\epsilon)\in G$.

To see what Step 1 gives us, we need to compute the action function
\[
\widehat{f}: D^2\to \R
\]
determined by $(\widehat{\phi},V+\epsilon)$.
If $1-\delta\le r\le 1$ then $\widehat{f}$ is a function of $r$, namely
\begin{equation}
\label{eqn:fhatout}
\widehat{f}(r,\theta) =
V+\epsilon - \int_r^1\rho^2\psi'(\rho)d\rho.
\end{equation}
If $r\le 1-\delta$, then $\widehat{f}$ differs from the old action function $f$ by a constant:
\begin{equation}
\label{eqn:fhatin}
\widehat{f}(r,\theta) = f(r,\theta) + \left[ V + \epsilon - \theta_0 - \int_{1-\delta}^1\rho^2\psi'(\rho)d\rho \right].
\end{equation}
It follows from \eqref{eqn:fhatout} and \eqref{eqn:fhatin} that if we choose $\delta$ sufficiently small, then
\begin{equation}
\label{eqn:fhatorbit}
\left|\hat{f}(r,\theta)-f(r,\theta)\right| < \epsilon/3
\end{equation}
for $r\le 1-\delta$, and 
\begin{equation}
\label{eqn:Vhat}
\left|\calabi(\widehat{\phi},V+\epsilon)-V\right| < \epsilon/2.
\end{equation}

By \eqref{eqn:Vhat}, Step 1 is applicable to the pair $\left(\widehat{\phi},V+\epsilon\right)$, and we obtain
\begin{equation}
\label{eqn:weakerapplied}
\op{inf}\left\{\frac{\widehat{\A}(\gamma)}{d(\gamma)} \;\bigg|\; \gamma\in\mc{P}\left(\widehat{\phi}\right)\right\} \le \sqrt{(V+\epsilon)(V+\epsilon/2)}
\end{equation}
where $\widehat{\A}$ denotes the total action defined using the action function $\widehat{f}$. Now the inequality \eqref{eqn:weakerapplied} is still true if in the infimum on the left hand side we restrict to periodic orbits that are contained in the region $r\le 1-\delta$. The reason is that by \eqref{eqn:fhatout}, any periodic orbit in the region where $r\ge 1-\delta$ has mean action at least $V+\epsilon$, which is greater than the right hand side of \eqref{eqn:weakerapplied}. Thus we have
\[
\op{inf}\left\{\frac{\widehat{\A}(\gamma)}{d(\gamma)} \;\bigg|\; \gamma\in\mc{P}\left(\widehat{\phi}\big|_{(r\le 1-\delta)}\right)\right\} \le \sqrt{(V+\epsilon)(V+\epsilon/2)}.
\]
On the other hand, $\widehat{\phi}$ agrees with $\phi$ on the region where $r\le 1-\delta$, and if $\gamma$ is a periodic orbit in this region, then by \eqref{eqn:fhatin}, the total actions determined by $\widehat{f}$ and $f$ satisfy
\[
\left|\widehat{\A}(\gamma) - \A(\gamma)\right| < d(\gamma)\epsilon/3.
\]
Thus we obtain
\[
\inf\left\{\frac{\A(\gamma)}{d(\gamma)} \;\bigg|\; \gamma\in\mc{P}(\phi)\right\} \le \sqrt{(V+\epsilon)(V+\epsilon/2)} + \epsilon/3.
\]
Since $\epsilon>0$ can be arbitrarily small, this gives
\[
\inf\left\{\frac{\A(\gamma)}{d(\gamma)} \;\bigg|\; \gamma\in\mc{P}(\phi)\right\} \le V
\]
as desired.

{\em Step 3.\/} We now drop the assumption that $\theta_0$ is irrational. To do so, pick $\epsilon>0$ small. As in Step 2, we can twist near the boundary to replace $(\phi,\theta_0)$ by $\left(\widehat{\phi},\widehat{\theta_0}\right)$ where $\widehat{\theta_0}$ is irrational and close to $\theta_0$, and the inequalities \eqref{eqn:fhatorbit} and \eqref{eqn:Vhat} hold. Then applying the result of Step 2 to the pair $\left(\widehat{\phi},\widehat{\theta_0}\right)$, we find that
\[
\inf\left\{\frac{A(\gamma)}{d(\gamma)} \;\bigg|\; \gamma\in\mc{P}(\phi)\right\} \le V + 5\epsilon/6.
\]
Since $\epsilon>0$ was arbitrary, the result \eqref{eqn:conclusion} follows.
\end{proof}

\section{Input from embedded contact homology}
\label{sec:input}

The proof of Proposition~\ref{prop:reduced} uses Proposition~\ref{prop:input} below, which is proved using embedded contact homology. To state the latter result, we need the following preliminary definitions.

Let $\lambda$ be a contact form on a closed oriented three-manifold $Y$. An {\em orbit set\/} is a finite set of pairs $\alpha=\{(\alpha_i,m_i)\}$ where the $\alpha_i$ are distinct simple Reeb orbits and the $m_i$ are positive integers. One can think of the integers $m_i$ as ``multiplicities''. We sometimes write an orbit set using the multiplicative notation
\[
\alpha = \prod_i\alpha_i^{m_i}.
\]
Let $\mc{O}(\lambda)$ denote the set of orbit sets. The symplectic action of an orbit set is defined by
\begin{equation}
\label{eqn:aalpha}
\A(\alpha) = \sum_im_i\A(\alpha_i).
\end{equation}
If $H_1(Y)=0$ and if $K$ is an oriented knot disjoint from the Reeb orbits $\alpha_i$, then we define the linking number
\begin{equation}
\label{eqn:lalpha}
\ell(\alpha,K) = \sum_im_i\ell(\alpha_i,K).
\end{equation}

If $a$ and $b$ are positive real numbers, define $\{N_k(a,b)\}_{k=0,1,\ldots}$ to be the sequence of nonnegative integer linear combinations of $a$ and $b$, written in nondecreasing order with repetitions. We can now state:

\begin{proposition}
\label{prop:input}
Let $\lambda$ be a contact form on $S^3$. Suppose that $\lambda$ is compatible with an open book decomposition in which the pages are disks, and the binding $B$ is elliptic with positive irrational rotation number. Then for every $\epsilon>0$, if $k$ is a sufficiently large positive integer, then there exists an orbit set $\alpha$ not including the binding, and a nonnegative integer $m$, such that
\begin{align}
\label{eqn:abound}
\frac{(\A(\alpha)+m\A(B))^2}{2k} &\le \op{vol}(S^3,\lambda) + \epsilon,\\
\label{eqn:lbound}
\ell(\alpha,B) + m\op{rot}(B) &\ge N_k(1,\op{rot}(B)).
\end{align}
\end{proposition}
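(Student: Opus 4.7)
The plan is to combine two pieces of input from embedded contact homology: the Cristofaro-Gardiner-Hutchings-Ramos asymptotic formula relating ECH capacities to contact volume, which produces an orbit set of small symplectic action in the right grading, together with the knot filtration of \S\ref{sec:filtration} determined by the transverse knot $B$, which forces the same orbit set to link $B$ many times.

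First, after a small perturbation making $\lambda$ nondegenerate (the conclusion of Proposition~\ref{prop:input} is stable under $C^0$-small perturbations of $\lambda$, which change $\op{vol}$, $\A(B)$, and $\op{rot}(B)$ by amounts that can be absorbed into $\epsilon$), I would invoke the volume asymptotic $\lim_{k\to\infty} c_k(S^3,\lambda)^2/k = 2\op{vol}(S^3,\lambda)$ to conclude that $c_k(S^3,\lambda)^2 \le 2k(\op{vol}(S^3,\lambda) + \epsilon)$ for $k$ sufficiently large. By the defining property of the ECH capacity, the distinguished nonzero class $\sigma_k = U^{-k}[\emptyset] \in \op{ECH}_{2k}(S^3,\lambda,0)$ admits a chain-level representative $\xi = \sum_i n_i \beta_i$ all of whose generators $\beta_i$ have symplectic action at most $c_k(S^3,\lambda) + \eta$, for arbitrarily small $\eta > 0$.

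Second, I would apply the knot filtration $\mc{F}_B$ constructed in \S\ref{sec:filtration}, which assigns to an orbit set $B^m \cdot \alpha$ (with $\alpha$ not involving $B$) the value $\mc{F}_B(B^m\alpha) = \ell(\alpha,B) + m\op{rot}(B)$, and is arranged so that the ECH differential weakly decreases $\mc{F}_B$. The central computation at the end of \S\ref{sec:filtration}, which uses the topological invariance proved in \S\ref{sec:functoriality} to reduce to a standard open book model (analogous to an irrational ellipsoid $E(1,\op{rot}(B))$ whose ECH generators are indexed by nonnegative integer pairs $(a,b)$ with $\mc{F}_B$-value $a + b\op{rot}(B)$), identifies the $\mc{F}_B$-filtration level of $\sigma_k$ with $N_k(1,\op{rot}(B))$. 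Consequently every chain representative of $\sigma_k$, including the low-action $\xi$ above, must contain at least one generator $\beta = B^m\alpha$ with $\mc{F}_B(\beta) \ge N_k(1,\op{rot}(B))$. That single generator then satisfies $\A(\alpha) + m\A(B) = \A(\beta) \le c_k(S^3,\lambda) + \eta$, which upon squaring and sending $\eta \to 0$ yields \eqref{eqn:abound}, together with $\ell(\alpha,B) + m\op{rot}(B) = \mc{F}_B(\beta) \ge N_k(1,\op{rot}(B))$, which is exactly \eqref{eqn:lbound}.

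The main obstacle is the identification of the $\mc{F}_B$-filtration level of $\sigma_k$ with $N_k(1,\op{rot}(B))$: this requires both the careful construction of the filtration in \S\ref{sec:filtration} (ensuring that the differential respects it, a nontrivial assertion about pseudoholomorphic curves in the presence of the transverse knot $B$) and its topological invariance from \S\ref{sec:functoriality}, which lets one reduce to a model where the filtration levels can be read off directly. Once those facts are in hand, extracting a single generator satisfying both the action and filtration bounds is formal.
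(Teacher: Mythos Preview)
Your argument in the nondegenerate case is exactly the paper's: use Theorem~\ref{thm:vc} to bound $c_k$ and extract a low-action cycle representing the grading-$2k$ generator, then invoke Proposition~\ref{prop:filtcomp} (the knot-filtration computation via topological invariance and the ellipsoid model) to force at least one generator $B^m\alpha$ in that cycle to satisfy $\mc{F}_B(B^m\alpha)\ge N_k(1,\op{rot}(B))$; since the action bound holds for every generator in the cycle, that one generator gives both \eqref{eqn:abound} and \eqref{eqn:lbound}.

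The gap is your treatment of the degenerate case. The conclusion of Proposition~\ref{prop:input} is \emph{not} stable under $C^0$-small perturbations in the sense you claim: the orbit set produced for a perturbed form $\lambda_n$ consists of Reeb orbits of $\lambda_n$, not of $\lambda$, so there is nothing to ``absorb into $\epsilon$''. The paper's Step~2 instead (i) perturbs to nondegenerate $\lambda_n=f_n\lambda$ while keeping $B$ an elliptic Reeb orbit with the \emph{same} irrational rotation number and action (so $\op{rot}(B)$ does not move at all---this matters, since the filtration computation in Proposition~\ref{prop:filtcomp} requires irrationality and the bound \eqref{eqn:lbound} involves $\op{rot}(B)$ exactly); (ii) applies the nondegenerate case to each $\lambda_n$ to get $(\alpha(n),m(n))$; and (iii) uses the uniform action bound from \eqref{eqn:deg1} together with a positive lower bound on orbit periods to pass to a subsequential limit $(\alpha,m)$ for $\lambda$, checking that no limit orbit is a cover of $B$ because $B$ is already a nondegenerate Reeb orbit of $\lambda$. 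This compactness argument is routine but is not the one-line stability you assert.
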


Proposition~\ref{prop:input} will be proved in \S\ref{sec:filtration}. Assuming this proposition, we now prove Proposition~\ref{prop:reduced}.

We first need the following combinatorial lemma. We give the elementary proof here because we will need equation \eqref{eqn:latticepoints} again in Lemma~\ref{lem:ellipsoid}.

\begin{lemma}
\label{lem:comb}
Given positive real numbers $a,b$, there exists a constant $c$ such that
\begin{equation}
\label{eqn:comb}
N_k(a,b)^2\ge 2abk-ck^{1/2}
\end{equation}
for every nonnegative integer $k$.
\end{lemma}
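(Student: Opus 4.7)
The plan is to interpret $N_k(a,b)$ via a lattice-point count and compare with the area of a triangle. Let
\[
f(L) = \#\{(m,n)\in\Z_{\ge 0}^2 : am+bn\le L\}.
\]
Since the sequence $\{N_j(a,b)\}_{j\ge 0}$ enumerates pairs $(m,n)$ with multiplicities (distinct pairs producing the same value each contribute an entry), setting $L = N_k(a,b)$ yields $f(L)\ge k+1$, because the $k+1$ pairs realizing $N_0,\dots,N_k$ all satisfy $am+bn\le L$.

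Next I would bound $f(L)$ from above by the area of the triangular region $R(L) = \{(x,y)\in\R_{\ge 0}^2 : ax+by\le L\}$, which equals $L^2/(2ab)$. To each pair $(m,n)$ counted by $f(L)$, associate the unit square $S_{m,n}=[m,m+1]\times[n,n+1]$; these squares are pairwise disjoint, and each lies inside the enlarged region $R(L+a+b)$ because its upper-right corner $(m+1,n+1)$ satisfies $a(m+1)+b(n+1)\le L+a+b$. Summing areas gives
\[
f(L) \le \frac{(L+a+b)^2}{2ab}.
\]

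Combining the two estimates with $L=N_k(a,b)$ yields $2abk \le (L+a+b)^2$, hence $L \ge \sqrt{2abk} - (a+b)$. When the right-hand side is nonnegative, squaring produces
\[
N_k(a,b)^2 \ge 2abk - 2(a+b)\sqrt{2ab}\,\sqrt{k} + (a+b)^2,
\]
which is \eqref{eqn:comb} with $c = 2(a+b)\sqrt{2ab}$. For the finite range of $k$ where $\sqrt{2abk}<a+b$, one checks that the same $c$ already makes $2abk - c\sqrt{k} \le 0 \le N_k(a,b)^2$, so no separate adjustment is needed. The argument is a routine lattice-point/area comparison and there is no genuine obstacle; the only mild care is to record that the error from the boundary of the triangle is $O(L)$, which translates to the $O(\sqrt{k})$ correction claimed.
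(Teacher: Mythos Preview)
Your argument is correct and follows the same overall strategy as the paper: interpret $N_k(a,b)$ through the count of lattice points in the triangle $\{ax+by\le L\}$, observe this count is at least $k+1$, and bound it above by the area plus an $O(L)$ boundary term, which yields the claimed $O(\sqrt{k})$ correction after solving for $L$.

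The one noteworthy difference is in how the upper bound on the lattice count is obtained. You use the unit-square covering trick to get $f(L)\le (L+a+b)^2/(2ab)$ directly. The paper instead chooses a specific lattice point $(m,n)$ on the hypotenuse, decomposes the triangle into the rectangle $[0,m]\times[0,n]$ and two smaller triangles, and writes down the exact identity
\[
|T\cap\Z^2| = (m+1)(n+1) + \sum_{i=1}^m\lfloor ai/b\rfloor + \sum_{j=1}^n\lfloor bj/a\rfloor,
\]
before estimating. Your route is a little cleaner for the purpose of this lemma alone. The paper's route is chosen deliberately: the exact identity above is reused verbatim in the ellipsoid computation (Lemma~\ref{lem:ellipsoid}) to identify the ECH grading with $2k$ when $N_k(a,b)=am+bn$. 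So the paper's version buys a formula needed elsewhere, while yours trades that for a shorter self-contained proof of the inequality.
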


\begin{proof}
We have $N_k(a,b)=am+bn$ for some nonnegative integers $m,n$. Write $L=am+bn$. Let $T$ be the triangle in the plane bounded by the axes and the line $ax+by=L$. Then by the definition of $N_k$, the number of lattice points in the triangle $T$ (including the boundary) is at least $k+1$ (with equality when $(m,n)$ is the only lattice point on the diagonal edge). By dividing the triangle $T$ into the rectangle with corners at $(0,0),(m,0),(0,n),(m,n)$ and two smaller triangles, we find that the number of lattice points in $T$ is
\begin{align}
\label{eqn:latticepoints}
|T\cap \Z^2| &= (m+1)(n+1) + \sum_{i=1}^m\floor{\frac{ai}{b}} + \sum_{j=1}^n\floor{\frac{bj}{a}}\\
\nonumber
&\le (m+1)(n+1) + \sum_{i=1}^m\frac{ai}{b} + \sum_{j=1}^n\frac{bj}{a}\\
\nonumber
&= (m+1)(n+1) + \frac{am(m+1)}{2b} + \frac{bn(n+1)}{2a}\\
\nonumber
&= \frac{L^2}{2ab} + \frac{L}{2a} + \frac{L}{2b} + \frac{m+n}{2} + 1.
\end{align}
Thus
\[
2abk \le L^2 + (a+b)L + ab(m+n).
\]
The estimate \eqref{eqn:comb} follows from this.
\end{proof}

\begin{proof}[Proof of Proposition~\ref{prop:reduced} (assuming Proposition~\ref{prop:input}).]

Let $\mc{O}_0$ denote the set of orbit sets that do not include the binding $B$. It is enough to show that
\begin{equation}
\label{eqn:rets}
\inf\left\{\frac{\A(\alpha)}{\ell(\alpha,B)} \;\bigg|\; \alpha\in\mc{O}_0\right\} \le \sqrt{\theta_0V}.
\end{equation}
The reason is that if $\alpha=\{(\alpha_i,m_i)\}\in\mc{O}_0$ is an orbit set not including the binding, then by equations \eqref{eqn:aalpha} and \eqref{eqn:lalpha}, at least one of the orbits $\alpha_i$ must have
\[
\frac{\mc{A}(\alpha_i)}{\ell(\alpha_i,B)}\le \frac{\mc{A}(\alpha)}{\ell(\alpha,B)}.
\]

To prove \eqref{eqn:rets}, let $\epsilon>0$ so that
\begin{equation}
\label{eqn:epsilonsmall}
V+\epsilon < \theta_0.
\end{equation}
Choose $k>0$ sufficiently large with respect to $\epsilon$ so that Proposition~\ref{prop:input} applies to give an orbit set $\alpha\in\mc{O}_0$ and a nonnegative integer $m$ with
\begin{align}
\label{eqn:abound2}
(\A(\alpha)+m)^2 &\le 2k(V + \epsilon),\\
\label{eqn:lbound2}
\ell(\alpha,B)+m\theta_0^{-1} &\ge N_k(1,\theta_0^{-1}).
\end{align}
By Lemma~\ref{lem:comb}, the inequality \eqref{eqn:lbound2} implies that
\begin{equation}
\label{eqn:lbound3}
(\ell(\alpha,B) + m\theta_0^{-1})^2\ge 2k\theta_0^{-1} - ck^{1/2}
\end{equation}
where $c$ is a constant depending only on $\theta_0$ and not on $k$. Now \eqref{eqn:abound2} gives us an upper bound on $\A(\alpha)$, namely
\begin{equation}
\label{eqn:aub}
\A(\alpha) \le \sqrt{2k(V + \epsilon)} - m.
\end{equation}
And \eqref{eqn:lbound3} gives us a lower bound on $\ell(\alpha,B)$, namely
\begin{equation}
\label{eqn:lub}
\ell(\alpha,B) \ge \sqrt{2k\theta_0^{-1} - ck^{1/2}} - m\theta_0^{-1}.
\end{equation}
Note that the right hand side of \eqref{eqn:lub} is positive when $k$ is sufficiently large, because \eqref{eqn:abound2} and \eqref{eqn:epsilonsmall} imply that $m < h\sqrt{2k\theta_0}$ where $h<1$ is a constant independent of $k$. Assuming that $k$ is sufficiently large in this sense, we can then divide \eqref{eqn:aub} by \eqref{eqn:lub} to get
\begin{equation}
\label{eqn:mm}
\frac{\A(\alpha)}{\ell(\alpha,B)} \le \theta_0\frac{\sqrt{2k(V + \epsilon)} - m}{\sqrt{2k\theta_0 - c\theta_0^2k^{1/2}} - m}.
\end{equation}
If we regard the right hand side of \eqref{eqn:mm} as a function of the nonnegative integer $m$, then it is maximized when $m=0$, as long as
\begin{equation}
\label{eqn:ksl}
2k(V+\epsilon) \le 2k\theta_0 - c\theta_0^2k^{1/2}.
\end{equation}
The inequality \eqref{eqn:ksl} holds if $k$ is sufficiently large, by \eqref{eqn:epsilonsmall}, and we then obtain
\[
\frac{\A(\alpha)}{\ell(\alpha,B)} \le \sqrt{\frac{2k(V+\epsilon)}{2k\theta_0^{-1}-ck^{1/2}}}.
\]
Since $k$ can be arbitrarily large, taking $k\to\infty$ gives
\[
\inf\left\{\frac{\A(\alpha)}{\ell(\alpha,B)} \;\bigg|\; \alpha\in\mc{O}_0\right\} \le \sqrt{\theta_0(V+\epsilon)}.
\]
Since $\epsilon>0$ can be arbitrarily small, this proves \eqref{eqn:rets}.
\end{proof}

\begin{remark}
It may be possible to improve the upper bound in Proposition~\ref{prop:reduced} to $V$, and to drop the assumption that $\theta_0$ is irrational, similarly to the arguments in \S\ref{sec:twist}. However we do not need this.
\end{remark}

\section{Review of embedded contact homology}
\label{sec:review}

We now review some notions from embedded contact homology (ECH) that will be needed in the proof of Proposition~\ref{prop:input}. For more details about ECH, see \cite{bn} and the references therein. Readers familiar with ECH may wish to skip ahead to \S\ref{sec:filtration}.

\subsection{Definition of ECH}

Let $Y$ be a closed oriented three-manifold.
Let $\lambda$ be a contact form on $Y$. A Reeb orbit $\gamma$ is {\em nondegenerate\/} if the linearized return map $P_\gamma$ in \eqref{eqn:lrm} does not have $1$ as an eigenvalue. In this case $\gamma$ is either elliptic or {\em hyperbolic\/}, meaning that $P_\gamma$ has real eigenvalues. The contact form $\lambda$ is {\em nondegenerate\/} if all (not necessarily simple) Reeb orbits are nondegenerate. Generic contact forms have this property. Assume below that $\lambda$ is nondegenerate.

The {\em embedded contact homology\/} $ECH(Y,\lambda)$ is the homology of a chain complex $ECC(Y,\lambda,J)$ over $\Z/2$ defined as follows\footnote{It is also possible to define ECH over $\Z$, see \cite[\S9]{obg2}, but that is not necessary for this paper (or for any of the other applications thus far).}.

The chain complex $ECC(Y,\lambda,J)$ is freely generated over $\Z/2$ by orbit sets $\alpha=\{(\alpha_i,m_i)\}$ such that $m_i=1$ whenever $\alpha_i$ is hyperbolic. For some explanation of why we add this last condition, see \cite[\S\S2.7,5.4]{bn}.

If we assume that $H_1(Y)=0$, then the chain complex has an absolute $\Z$-grading, and we will denote it by $ECC_*$ to indicate this grading. The grading of a generator $\alpha=\{(\alpha_i,m_i)\}$ is given by
\begin{equation}
\label{eqn:grading}
I(\alpha) = -\sum_im_i\op{sl}(\alpha_i) + \sum_{i\neq j}m_im_j\ell(\alpha_i,\alpha_j) + \sum_i\sum_{k=1}^{m_i}\left(\floor{k\theta_i}+\ceil{k\theta_i}\right).
\end{equation}
Here $\op{sl}(\alpha_i)$ denotes the self-linking number\footnote{The self-linking number of a simple Reeb orbit $\gamma$, or more generally of any transverse knot (oriented knot that is positively transverse to $\xi$) can be defined as follows. Let $\tau$ denote the homotopy class of symplectic trivialization of $\xi|_\gamma$ for which a pushoff of $\gamma$ has linking number zero with $\gamma$. Let $\Sigma$ be a Seifert surface for $\gamma$. Then $\op{sl}(\gamma) = -c_1(\xi|_\Sigma,\tau)$, where the right hand side denotes the relative first Chern class, see \cite[\S3.2]{bn}. The grading formula \eqref{eqn:grading} is obtained from the more usual grading formula \cite[Eq.\ (3.11)]{bn} using the above trivialization $\tau$.} of $\alpha_i$, while $\ell$ denotes the linking number as usual, and $\theta_i$ denotes the rotation number $\op{rot}(\alpha_i)$.

We say that an almost complex structure $J$ on $\R\times Y$ is {\em $\lambda$-compatible\/} if it satisfies the following conditions:
\begin{itemize}
\item $J$ is $\R$-invariant.
\item
$J$ sends $\xi$ to itself, rotating positively with respect to $d\lambda$.
\item
 $J(\partial_s)=R$, where $s$ denotes the $\R$ coordinate and $R$ denotes the Reeb vector field. 
\end{itemize}
Fix a $\lambda$-compatible $J$. Let $\alpha=\{(\alpha_i,m_i)\}$ and $\beta=\{(\beta_j,n_j)\}$ be orbit sets. A {\em $J$-holomorphic curve\/} from $\alpha$ to $\beta$ is a $J$-holomorphic curve in $\R\times Y$ with positive ends at covers of $\alpha_i$ with total covering multiplicity $m_i$, negative ends at covers of $\beta_j$ with total covering multiplicity $n_j$, and no other ends. See \cite[\S3.1]{bn} for more details.

A {\em $J$-holomorphic current\/} from $\alpha$ to $\beta$ is a finite formal sum $\current = \sum_kd_kC_k$ where the $C_k$ are distinct irreducible somewhere injective $J$-holomorphic curves from orbit sets $\alpha(k)$ to $\beta(k)$, the $d_k$ are positive integers, and $\prod_k\alpha(k)^{d(k)}=\alpha$ and $\prod_k\beta(k)^{d(k)}=\beta$. Here the product of orbit sets is defined by adding the multiplicities of all Reeb orbits involved.  The curves $C_k$ are the {\em components\/} of the holomorphic current $\mc{C}$, and one can think of the integers $d_k$ as ``multiplicities''.  Let $\mc{M}^J(\alpha,\beta)$ denote the set of $J$-holomorphic currents from $\alpha$ to $\beta$.

To define the differential on the chain complex, we choose a suitably generic $\lambda$-compatible almost complex structure $J$. To simplify notation, we restrict to the case $H_1(Y)=0$, which is all that we will need (see \cite[\S3.5]{bn} for the general case). If $\alpha$ is a chain complex generator, we then define
\[
\partial\alpha = \sum_{I(\beta) = I(\alpha) - 1} \#\left(\mc{M}^J(\alpha,\beta)/\R\right)\beta
\]
where in the sum $\beta$ is another chain complex generator, the $\R$ action is by translation in $\R\times Y$, and $\#$ denotes the mod 2 count. See \cite[\S5.3]{bn} for the proof that the set being counted is finite. It is shown in \cite{obg1} that $\partial^2=0$. Thus we have a well-defined chain complex $ECC_*(Y,\lambda,J)$, and its homology is the embedded contact homology $ECH_*(Y,\lambda)$. It follows from the theorem of Taubes \cite{e1} relating ECH to Seiberg-Witten Floer cohomology \cite{km} that $ECH_*(Y,\lambda)$ does not depend on the choice of $J$; in fact it only depends on $Y$ and $\xi$.

\subsection{The ellipsoid example}
\label{sec:ellipsoid}

In the present paper we will need a detailed understanding of the ECH chain complex for the boundary of an ellipsoid.

If $Y$ is a star-shaped hypersurface in $\R^4$, then the Liouville form
\[
\lambda_0 = \frac{1}{2}\sum_{j=1}^2\left(x_jdy_j - y_jdx_j\right)
\]
restricts to a contact form on $Y$. In particular, let $Y$ be the boundary of the ellipsoid
\[
E(a,b) = \left\{z\in\C^2\;\bigg|\;\frac{\pi|z_1|^2}{a} + \frac{\pi|z_2|^2}{b} \le 1\right\}
\]
where $a,b>0$ are constants with $a/b$ irrational. Of course $Y$ is diffeomorphic to $S^3$. We will need the following explicit description of the chain complex $ECC_*(Y,{\lambda_0}|_Y,J)$, for any $\lambda_0$-compatible almost complex structure $J$.

\begin{lemma}
\label{lem:ellipsoid}
Let $Y=\partial E(a,b)$ where $a/b$ is irrational. Then:
\begin{description}
\item{(a)}
The Liouville form $\lambda_0$ restricts to a nondegenerate contact form on $Y$.
\item{(b)}
The grading defines a bijection from the set of generators of $ECC_*(Y,\lambda_0,J)$ to the set of nonnegative even integers.
\item{(c)}
The grading $2k$ generator has symplectic action $N_k(a,b)$.
\end{description}
\end{lemma}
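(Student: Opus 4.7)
The plan is to describe the Reeb dynamics on $\partial E(a,b)$ explicitly and then turn the crank on the grading formula \eqref{eqn:grading}. Writing $z_j = r_j e^{i\theta_j}$, we have $\lambda_0 = \tfrac{1}{2}(r_1^2 d\theta_1 + r_2^2 d\theta_2)$, and a direct computation shows that the Reeb vector field on $\partial E(a,b)$ is $R = (2\pi/a)\partial_{\theta_1} + (2\pi/b)\partial_{\theta_2}$. Since $a/b$ is irrational, any closed orbit of $R$ must lie in $\{z_1=0\}$ or $\{z_2=0\}$, giving the two simple Reeb orbits $e_1 = \{z_2=0,\,\pi r_1^2 = a\}$ of action $a$ and $e_2 = \{z_1=0,\,\pi r_2^2 = b\}$ of action $b$. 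The linearized Reeb flow along $e_1$ acts on $\xi_{e_1(0)} = \op{Span}(\partial_{x_2}, \partial_{y_2})$ as rotation by $2\pi t/b$ at time $t$, and symmetrically for $e_2$. Irrationality of $a/b$ rules out $1$ as a return-map eigenvalue of any iterate $e_i^k$, so all iterates are nondegenerate and elliptic; this proves (a).

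Consequently the chain complex generators are precisely the orbit sets $e_1^m e_2^n$ for $m,n\geq 0$, with symplectic action $\A(e_1^m e_2^n) = ma + nb$. To apply \eqref{eqn:grading} I need to verify that (i) the coordinate frame $(\partial_{x_2}, \partial_{y_2})$ on $\xi|_{e_1}$ is the zero-linking trivialization, so that $\op{rot}(e_1) = a/b$ and symmetrically $\op{rot}(e_2) = b/a$; (ii) $\op{sl}(e_1) = \op{sl}(e_2) = -1$; and (iii) $\ell(e_1, e_2) = 1$. For (i), the pushoff $\{\pi r_1^2 = a,\, z_2 = \eps\}$ is disjoint from the disk $\{z_2 = 0,\, \pi r_1^2 \leq a\}$ bounded by $e_1$, which gives linking number zero (after a small perturbation of the pushoff back into $S^3$). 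For (iii), $e_1$ and $e_2$ form the standard positive Hopf link in $S^3$. For (ii), each $e_i$ is a transverse unknot of Hopf-fiber type; the value $\op{sl}(e_i) = -1$ follows from the relative Chern class definition recalled in the footnote, or by deforming through star-shaped hypersurfaces to $\partial E(1,1)$ along which the $e_i$ remain transverse unknots and become genuine Hopf fibers.

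Substituting these values into \eqref{eqn:grading} and using irrationality of $a/b$ to rewrite $\lfloor k\theta\rfloor + \lceil k\theta\rceil = 2\lfloor k\theta\rfloor + 1$, we obtain
$$
I(e_1^m e_2^n) \;=\; 2\Bigl[(m+1)(n+1) - 1 + \sum_{k=1}^m \lfloor ka/b\rfloor + \sum_{k=1}^n \lfloor kb/a\rfloor\Bigr].
$$
By equation \eqref{eqn:latticepoints} applied with $L = am + bn$, the bracketed expression equals $|T \cap \Z^2| - 1$, where $T$ is the closed triangle bounded by the coordinate axes and the line $ax + by = am + bn$. Because $a/b$ is irrational, the map $(i,j) \mapsto ai + bj$ is an order-preserving bijection from $\Z_{\geq 0}^2$ onto $\{N_k(a,b)\}_{k\geq 0}$; hence if $am + bn = N_k(a,b)$, then $T$ contains precisely $k+1$ lattice points, and $I(e_1^m e_2^n) = 2k$. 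This simultaneously yields (b) (the grading is a bijection onto $2\Z_{\geq 0}$) and (c) (the grading $2k$ generator has action $N_k(a,b)$).

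The main obstacle I anticipate is the bookkeeping in step (ii) of the second paragraph: pinning down the sign convention for $\op{sl}$ and confirming that the naive coordinate-plane frame really is the zero-linking trivialization referenced in \eqref{eqn:grading}. An error or off-by-one in any of $\op{sl}(e_i)$, $\ell(e_1,e_2)$, or $\op{rot}(e_i)$ would shift the grading by a predictable integer and spoil the clean identification of the grading $2k$ generator with action $N_k(a,b)$, even though the overall shape of the answer would remain unchanged.
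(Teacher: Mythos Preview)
Your proposal is correct and follows essentially the same route as the paper's proof: compute the Reeb vector field explicitly, identify the two elliptic orbits with rotation numbers $a/b$ and $b/a$, plug self-linking $-1$ and mutual linking $1$ into the grading formula \eqref{eqn:grading}, and then invoke the lattice-point identity \eqref{eqn:latticepoints} to match the grading with $N_k(a,b)$. The paper simply asserts the values of $\op{sl}$, $\ell$, and $\op{rot}$ without further comment, whereas you supply brief geometric justifications; your extra step rewriting $\lfloor k\theta\rfloor+\lceil k\theta\rceil=2\lfloor k\theta\rfloor+1$ (valid since $a/b\notin\Q$) makes explicit a simplification the paper leaves implicit.
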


\begin{proof}
The Reeb vector field on $Y$ is given by
\[
R = {2\pi}\left(\frac{1}{a}\frac{\partial}{\partial \theta_1} + \frac{1}{b}\frac{\partial}{\partial \theta_2}\right)
\]
where $\theta_j$ denotes the angular polar coordinate on the $x_j,y_j$ plane. It follows that there are just two simple Reeb orbits: the circle $\gamma_1=(z_2=0)$, and the circle $\gamma_2=(z_1=0)$.  These orbits have symplectic action $a$ and $b$ respectively. The orbit $\gamma_1$ is elliptic with rotation number $a/b$, and the orbit $\gamma_2$ is elliptic with rotation number $b/a$. In particular, all covers of these orbits are nondegenerate since $a/b$ is irrational. This proves (a).

The chain complex generators now have the form $\gamma_1^{m}\gamma_2^n$ where $m,n$ are nonegative integers\footnote{The notation $\gamma_1^m\gamma_2^n$ is shorthand for the orbit set containing $(\gamma_1,m)$ when $m>0$ and $(\gamma_2,n)$ when $n>0$.}. The action of a generator is given by
\begin{equation}
\label{eqn:aog}
\A(\gamma_1^m\gamma_2^n)=am+bn.
\end{equation}
To compute the grading, note that the orbits $\gamma_1$ and $\gamma_2$ both have self-linking number $-1$, and their linking number is $1$. It then follows from the grading formula \eqref{eqn:grading} that
\[
I(\gamma_1^m\gamma_2^n) = 2\left((m+1)(n+1)-1+\sum_{i=1}^m\floor{ia/b} + \sum_{j=1}^n\floor{jb/a}\right).
\]
It follows from this and equation \eqref{eqn:latticepoints} that
\[
I(\gamma_1^m\gamma_2^n) = 2k \Longleftrightarrow N_k(a,b)=am+bn.
\]
Together with \eqref{eqn:aog}, this implies (b) and (c).
\end{proof}

\subsection{Filtered ECH and the ECH spectrum}
\label{sec:filtered}

Let $Y$ be a closed oriented three-manifold, let $\lambda$ be a nondegenerate contact form on $Y$, and let $J$ be a generic $\lambda$-compatible almost complex structure on $\R\times Y$ as needed to define the ECH chain complex $ECC(Y,\lambda,J)$. It follows from the definition of $\lambda$-compatible almost complex structure that if $\alpha$ and $\beta$ are orbit sets and if the moduli space $\mc{M}^J(\alpha,\beta)$ is nonempty, then $\mc{A}(\alpha)\ge \mc{A}(\beta)$, with equality only if $\alpha=\beta$. In particular, the differential decreases symplectic action. Thus for each $L\in\R$ we have a subcomplex $ECC^L(Y,\lambda,J)$, defined to be the span of those chain complex generators $\alpha$ for which $\mc{A}(\alpha)<L$. The homology of this subcomplex is the {\em filtered ECH\/}, denoted by $ECH^L(Y,\lambda)$. It is shown in \cite[Thm.\ 1.3]{cc2} that filtered ECH does not depend on $J$. However it does depend on the contact form $\lambda$ and not just on the contact structure $\xi$.

If $r>0$ is a constant then there is a canonical ``scaling'' isomorphism
\begin{equation}
\label{eqn:scalingiso}
ECH^L(Y,\lambda) = ECH^{rL}(Y,r\lambda).
\end{equation}
To see this, note that $\lambda$ and $r\lambda$ have the same Reeb orbits up to reparametrization. If $J$ is a generic $\lambda$-compatible almost complex structure, then there is a unique $r\lambda$-compatible almost complex structure $J^r$ which agrees with $J$ on the contact planes; and $J$-holomorphic curves correspond to $J^r$-holomorphic curves under rescaling the $\R$ coordinate on $\R\times Y$ by $r$. Thus the above bijection on Reeb orbits gives an isomorphism of chain complexes
\begin{equation}
\label{eqn:scalingisochain}
ECC^L(Y,\lambda,J) = ECC^{rL}(Y,r\lambda,J^r).
\end{equation}
There are also maps
\begin{equation}
\label{eqn:inclusion1}
ECH^L(Y,\lambda) \longrightarrow ECH(Y,\lambda)
\end{equation}
and
\begin{equation}
\label{eqn:inclusion2}
ECH^L(Y,\lambda) \longrightarrow ECH^{L'}(Y,\lambda)
\end{equation}
for $L\le L'$
induced by inclusion of chain complexes. It is shown in \cite[Thm.\ 1.3]{cc2} that the maps \eqref{eqn:scalingiso}, \eqref{eqn:inclusion1} and \eqref{eqn:inclusion2} do not depend on $J$.

We now introduce the ECH spectrum in the special case that we will need, which is where $\lambda$ is a nondegenerate contact form on $S^3$ whose kernel is the standard tight contact structure (given by the kernel of the Liouville form $\lambda_0$ on a compact star-shaped hypersurface in $\R^4$). We know from Lemma~\ref{lem:ellipsoid} that
\[
ECH_*(S^3,\lambda) = \left\{\begin{array}{cl} \Z/2, & *=0,2,4,\ldots,\\
0, & \mbox{otherwise.}
\end{array}\right.
\]
We define $c_k(S^3,\lambda)$ to be the infimum over $L$ such that the generator of $ECH_{2k}(S^3,\lambda)$ is in the image of the inclusion-induced map \eqref{eqn:inclusion1}. Equivalently, if we choose an almost complex structure $J$ as needed to define the ECH differential, then $c_k(S^3,\lambda)$ is the minimum over $L$ such that the generator of $ECH_{2k}(S^3,\lambda)$ can be represented by a cycle in $ECC_{2k}(S^3,\lambda,J)$ such that each generator in the cycle has symplectic action at most $L$. The list of numbers $\{c_k(S^3,\lambda)\}_{k=0,1,\ldots}$ is called the {\em ECH spectrum\/} of $(S^3,\lambda)$.

The ECH spectrum behaves as follows if one multiplies the contact form by a positive function. First, if $r>0$ is a constant, then
\begin{equation}
\label{eqn:scaling}
c_k(S^3,r\lambda) = rc_k(S^3,\lambda).
\end{equation}
Second, if $f:Y\to\R$ is a function with $f\ge 1$ everywhere (and if $f\lambda$ is nondegenerate), then
\begin{equation}
\label{eqn:monotone}
c_k(S^3,f\lambda) \ge c_k(S^3,\lambda).
\end{equation}
The scaling property \eqref{eqn:scaling} can be proved using the isomorphism \eqref{eqn:scalingiso}, and the monotonicity property \eqref{eqn:monotone} can be proved using the cobordism maps reviewed in \S\ref{sec:cobordism}. For details see \cite[\S4]{qech}.

Given the above properties, one can extend the definition of $c_k$ to degenerate contact forms as follows. If $\lambda$ is degenerate, let $\{f_n:Y\to\R\}_{n=1,2,\ldots}$ be a sequence of positive functions with $f_n\to 1$ in the $C^0$ topology such that $f_n\lambda$ is nondegenerate for each $n$. We then define
\begin{equation}
\label{eqn:limit}
c_k(S^3,\lambda) = \lim_{n\to \infty}c_k(S^3,f_n\lambda).
\end{equation}
The properties \eqref{eqn:scaling} and \eqref{eqn:monotone} imply that \eqref{eqn:limit} is well defined and again satisfies properties \eqref{eqn:scaling} and \eqref{eqn:monotone}.

The key nontrivial fact about the ECH spectrum that we will need is the following:

\begin{theorem}
\label{thm:vc}
Let $\lambda$ be a contact form on $S^3$ whose kernel is the standard tight contact structure. Then
\[
\lim_{k\to\infty}\frac{c_k(S^3,\lambda)^2}{2k} = \op{vol}(S^3,\lambda).
\]
\end{theorem}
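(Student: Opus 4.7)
The plan is to first establish the theorem for boundaries of irrational ellipsoids by explicit calculation, and then extend to a general contact form on $(S^3,\xi_{\mathrm{std}})$ by a symplectic sandwich argument using monotonicity of the ECH spectrum.

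For the ellipsoid case, take $Y=\partial E(a,b)$ with $a/b$ irrational and $\lambda=\lambda_0|_Y$. By Lemma~\ref{lem:ellipsoid}, the generators of $ECC_*$ are indexed by the even integers $2k\ge 0$, the grading-$2k$ generator has symplectic action $N_k(a,b)$, and distinct generators have distinct actions. Consequently the grading-$2k$ generator is itself the unique cycle representative of the generator of $ECH_{2k}\cong\Z/2$, and no cycle of smaller action exists, so $c_k(S^3,\lambda)=N_k(a,b)$. A Stokes computation gives $\op{vol}(\partial E(a,b),\lambda_0)=ab$. The lattice-count \eqref{eqn:latticepoints} yields $N_k(a,b)^2\ge 2abk-O(\sqrt{k})$, and a matching upper bound $N_k(a,b)^2\le 2abk+O(\sqrt{k})$ follows from the opposite lattice estimate using $\lfloor x\rfloor\ge x-1$. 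Passing to rational $a/b$ via the continuity definition \eqref{eqn:limit} handles the remaining degenerate cases.

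To handle a general $\lambda$ with $\ker\lambda=\xi_{\mathrm{std}}$, realize it as $\sigma^*\lambda_0$ for a star-shaped hypersurface $Y\subset\R^4$ bounding a domain $X$. For each $\epsilon>0$, construct finite disjoint unions of open ellipsoids $U^-\subset X\subset U^+$ with $\op{vol}_{\R^4}(U^\pm)=\op{vol}_{\R^4}(X)\pm O(\epsilon)$. The monotonicity of ECH capacities under symplectic embeddings of Liouville domains, which strengthens \eqref{eqn:monotone} and is a consequence of the cobordism maps reviewed in \S\ref{sec:cobordism}, sandwiches $c_k(S^3,\lambda)$ between $c_k(\partial U^-,\lambda_0)$ and $c_k(\partial U^+,\lambda_0)$. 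For a finite disjoint union of ellipsoids one has the combinatorial formula that $c_k$ of $\bigsqcup_i \partial E(a_i,b_i)$ equals the $k$th term of the merged nondecreasing sequence built from all the $N_\bullet(a_i,b_i)$. Combining this with additivity of contact volume over disjoint unions and the ellipsoid asymptotic, the sandwich pins $\lim c_k(S^3,\lambda)^2/(2k)$ to within $O(\epsilon)$ of $\op{vol}(S^3,\lambda)$, and letting $\epsilon\to 0$ finishes the proof.

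The main obstacle is the sandwich step: producing ellipsoid packings of $X$ with arbitrarily small volume deficit, and establishing the combinatorial formula for ECH capacities of disjoint unions (which relies on neck-stretching of holomorphic curves together with the Taubes isomorphism). These are the analytic heart of the Cristofaro-Gardiner--Hutchings--Ramos proof of the volume-capacity asymptotic, and in practice I would invoke that theorem as a black box at this stage.
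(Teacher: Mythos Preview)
Your proposal and the paper end up in the same place: the paper's proof is simply a citation of \cite[Thm.\ 1.3]{vc} (the Cristofaro-Gardiner--Hutchings--Ramos result), with a remark that one can alternatively reduce to the ellipsoid case via \cite[Prop.\ 8.6(b)]{qech}. Your ellipsoid computation matches Example~\ref{ex:evc}, and your final sentence, invoking CGH--R as a black box, is exactly what the paper does.

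That said, the sandwich sketch you give before punting has two genuine errors worth flagging. First, the outer sandwich $X\subset U^+$ with $U^+$ a \emph{disjoint} union of ellipsoids and $\op{vol}(U^+)\approx\op{vol}(X)$ is impossible: $X$ is connected, hence lies in a single component $E_1$ of $U^+$, so you would need one ellipsoid $E_1\supset X$ with $\op{vol}(E_1)\approx\op{vol}(X)$, which fails for general star-shaped $X$. The actual upper-bound argument in \cite[Prop.\ 8.6(b)]{qech} instead packs ellipsoids into the \emph{complement} $B\setminus X$ inside a large ball $B$ and uses the disjoint-union inequality for $X\sqcup(\bigsqcup_i E_i)\subset B$; this is a genuinely different maneuver. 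Second, your ``merged nondecreasing sequence'' formula for $c_k$ of a disjoint union is not correct: merging the sequences $N_\bullet(a_i,b_i)$ gives an asymptotic of $\bigl(\sum_i (a_ib_i)^{-1}\bigr)^{-1}$ rather than $\sum_i a_ib_i$. The relevant disjoint-union property is instead $c_k\bigl(\bigsqcup_i X_i\bigr)\ge\max_{\sum k_i=k}\sum_i c_{k_i}(X_i)$, which does yield the additive volume asymptotic. None of this affects your bottom line, since you correctly identify that the hard step is external to this paper.
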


\begin{example}
\label{ex:evc}
If $(S^3,\lambda)$ is diffeomorphic to $(\partial E(a,b),\lambda_0)$, then it follows from Lemmas~\ref{lem:ellipsoid} and \ref{lem:comb} that
\[
\lim_{k\to\infty}\frac{c_k(S^3,\lambda)^2}{2k} = ab.
\]
The right hand side agrees with $\op{vol}(\partial E(a,b),\lambda_0)$, confirming the theorem in this case.
\end{example}

\begin{proof}[Proof of Theorem~\ref{thm:vc}.]
This is a special case of \cite[Thm.\ 1.3]{vc}, which is a more general result valid for any contact three-manifold, proved using Seiberg-Witten theory. One can also prove this particular case without using Seiberg-Witten theory (except as currently needed to define ECH cobordism maps) by using \cite[Prop.\ 8.6(b)]{qech} to reduce to Example~\ref{ex:evc}.
\end{proof}

\section{Knot filtration on ECH}
\label{sec:filtration}

We now define a new ``knot filtration'' on ECH and use it to prove Proposition~\ref{prop:input}.

Let $Y$ be a closed three-manifold with $H_1(Y)=0$.
Let $\lambda$ be a nondegenerate contact form on $Y$, and let $J$ be an generic $\lambda$-compatible almost complex structure on $\R\times Y$. Let $B$ be a simple elliptic Reeb orbit with rotation number $\theta_0\in\R\setminus\Q$. (This $\theta_0$ corresponds to the inverse of the $\theta_0$ considered previously.) We can use $B$ to define a filtration $\mc{F}_B$ on the ECH chain complex $ECC_*(Y,\lambda,J)$ as follows.

Consider an orbit set $B^m\alpha$ where $m$ is a nonnegative integer and $\alpha$ is an orbit set\footnote{Here we are not assuming that $B^m\alpha$ is a generator of the ECH chain complex, i.e.\ we do not require that hyperbolic orbits have multiplicity one.} not including $B$. We define
\[
\mc{F}_B(B^m\alpha) = m\theta_0 + \ell(\alpha,B).
\]
Of course $\mc{F}_B$ is not integer valued. However if $\theta_0>0$ and if every Reeb orbit other than $B$ has nonnegative linking number with $B$ (for example in the open book setting of Proposition~\ref{prop:reduced} where this linking number is always positive), then $\mc{F}_B$ takes values in a discrete set of nonnegative real numbers.

\begin{lemma}
\label{lem:filtration}
If $B^{m_+}\alpha_+$ and $B^{m_-}\alpha_-$ are orbit sets, and if there exists a $J$-holomorphic current $\current\in\mc{M}^J(B^{m_+}\alpha_+,B^{m_-}\alpha_-)$, then
\begin{equation}
\label{eqn:F}
\mc{F}_B(B^{m_+}\alpha_+) \ge \mc{F}_B(B^{m_-}\alpha_-).
\end{equation}
In particular, the ECH differential $\partial$ does not increase $\mc{F}_B$.
\end{lemma}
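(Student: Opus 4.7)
The plan is to apply positivity of intersection between $\current$ and the trivial cylinder $\R\times B$, after peeling off any components of $\current$ that are themselves covers of this trivial cylinder. I would decompose $\current = \current_0 + \current'$, where $\current_0$ is the sum of those components of $\current$ which are (possibly multiply covered) trivial cylinders over $B$, and $\current'$ gathers the rest. Let $n$ denote the total covering multiplicity of $\current_0$, and let $k_\pm$ denote the multiplicity with which $B$ appears among the $\pm\infty$ ends of $\current'$, so $m_\pm = n + k_\pm$. Since $\current_0$ contributes the same multiplicity of $B$ at top and bottom, the inequality \eqref{eqn:F} is equivalent to
\[
(k_+ - k_-)\theta_0 + \ell(\alpha_+, B) - \ell(\alpha_-, B) \;\ge\; 0.
\]

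No component of $\current'$ is a cover of $\R\times B$, so by positivity of intersection the algebraic count $\current' \cdot (\R\times B)$ in $\R\times Y$ is a nonnegative integer. I plan to interpret this count by slicing: for generic $s\in\R$, the slice $L_s \eqdef \current' \cap (\{s\}\times Y)$ is a closed multi-loop in $Y$ disjoint from $B$, and $\ell(L_s, B)$ jumps by $+1$ each time $s$ crosses a point of $\current' \cap (\R\times B)$ (positivity supplying the sign). Hence
\[
\lim_{s\to+\infty} \ell(L_s, B) \;-\; \lim_{s\to-\infty} \ell(L_s, B) \;=\; \current' \cdot (\R\times B) \;\ge\; 0.
\]

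The crux is to evaluate these asymptotic linking numbers in terms of the ends of $\current'$. For $s \gg 0$, $L_s$ splits into a ``far'' part, approximating loops around the simple orbits in $\alpha_+$ (whose linking with $B$ tends to $\ell(\alpha_+, B)$), and a ``near'' part, consisting of one braid in a tubular neighborhood of $B$ per positive end of $\current'$ at $B$. By the Hofer-Wysocki-Zehnder asymptotic formula, a positive end of multiplicity $q$ at the elliptic orbit $B$ is described to leading order by an eigenfunction of the asymptotic operator, and the $\tau$-winding of this eigenfunction equals the linking of the corresponding braid with $B$. The spectrum of the asymptotic operator on $B^q$ is an arithmetic progression proportional to $k-q\theta_0$, $k\in\Z$, with eigenfunction $\tau$-winding $k$; the sign condition for exponential decay then forces this winding to be at most $\floor{q\theta_0}$ at a positive end and at least $\ceil{q\theta_0}$ at a negative end. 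Summing over the ends at $B$ and using $\floor{q\theta_0} \le q\theta_0 \le \ceil{q\theta_0}$ gives
\[
\lim_{s\to+\infty}\ell(L_s, B) \le \ell(\alpha_+, B) + k_+\theta_0, \quad \lim_{s\to-\infty}\ell(L_s, B) \ge \ell(\alpha_-, B) + k_-\theta_0,
\]
which combined with the previous display yields the desired inequality. The final clause, that $\partial$ does not increase $\mc{F}_B$, then follows because the differential counts holomorphic currents.

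The main obstacle I anticipate is the asymptotic winding analysis: justifying that the $\tau$-winding of the leading eigenfunction of each end at $B$ indeed equals the linking with $B$ of the corresponding braid, and handling the case when $B$ appears in several ends of $\current'$, so that the near part of $L_s$ has several braid components that may link one another (contributing to the writhe $\ell(L_s, L_s)$ but not to $\ell(L_s, B)$). All of this rests on the Hofer-Wysocki-Zehnder/Siefring asymptotic formula, which is standard but requires careful bookkeeping and sign conventions to apply in the present setting.
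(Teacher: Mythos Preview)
Your proposal is correct and follows essentially the same route as the paper: intersection positivity with the trivial cylinder $\R\times B$ gives $\ell(\eta_+,B)\ge\ell(\eta_-,B)$ for large-$|s|$ slices, and the HWZ/Siefring asymptotic winding bounds $\ell(\zeta_+^i,B)\le\lfloor q_i\theta_0\rfloor$, $\ell(\zeta_-^j,B)\ge\lceil q_j\theta_0\rceil$ supply the rest. Your anticipated obstacles are not real issues---the $\tau$-winding equals the linking with $B$ precisely because $\tau$ is the zero-linking trivialization, and mutual linking among the several braid components near $B$ is irrelevant since only $\ell(L_s,B)$ enters; the paper handles the bookkeeping by reducing via linearity to a single somewhere-injective component, which is equivalent to your decomposition $\current=\current_0+\current'$.
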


\begin{remark}
The definition of ``$\lambda$-compatible'' almost complex structure implies that the ``trivial cylinder'' $\R\times B$ is automatically a $J$-holomorphic cylinder in $\R\times Y$.
The proof of Lemma~\ref{lem:filtration} will show that equality holds in \eqref{eqn:F} if and only if $m_+=m_-$, the current $\mc{C}$ includes $\R\times B$ with multiplicity $m_+$ when $m_+>0$, and no other component of $\mc{C}$ intersects $\R\times B$ or has an end at a cover of $B$.
\end{remark}

\begin{proof}[Proof of Lemma~\ref{lem:filtration}.]
By linearity of all the terms in \eqref{eqn:F}, we may assume without loss of generality that $\current$ consists of a single irreducible somewhere injective component $C$ which does not agree with $\R\times B$.

By standard results on the asymptotics of holomorphic curves, see e.g.\ \cite[Cor.\ 2.5, 2.6]{siefring} for quite strong versions of these, if $s_0>0$ is sufficiently large, then $C$ is transverse to $\{\pm s_0\}\times Y$, and $C\cap((-\infty,-s_0]\times Y)$ and $C\cap([s_0,\infty)\times Y)$ do not intersect $\R\times B$. Choose $s_0$ sufficiently large in this sense and let $\eta_\pm$ denote the intersection of $C$ with $\{\pm s_0\}\times Y$. Observe that
\[
\ell(\eta_+,B) - \ell(\eta_-,B) = \#(C\cap(\R\times B)).
\] 
Here `$\#$' denotes algebraic intersection number. By intersection positivity of the $J$-holomorphic curve $C$ with the $J$-holomorphic cylinder $\R\times B$, the right hand side is nonnegative, so
\begin{equation}
\label{eqn:elletaB}
\ell(\eta_+,B) \ge \ell(\eta_-,B)
\end{equation}
(with equality if and only if $C$ does not intersect $\R\times B$).

The link $\eta_\pm$ consists of a link approximating $\alpha_\pm$, together with a link $\zeta_\pm$ in a neighborhood of $B$. We then have
\begin{equation}
\label{eqn:elleaz}
\ell(\eta_\pm,B) = \ell(\alpha_\pm,B) + \ell(\zeta_\pm,B).
\end{equation}

The link $\zeta_+$ has one component for each positive end of $C$ at a cover of $B$. Let $q_1,\ldots,q_k$ denote the covering multiplicities of these ends, so that $\sum_{i=1}^kq_i=m_+$, and let $\zeta_+^1,\ldots,\zeta_+^k$ denote the corresponding components of $\zeta_+$. By results on the asymptotics of holomorphic curves going back to \cite[\S3]{hwz} and reviewed in \cite[Lem.\ 5.3(b)]{bn}, we have
\[
\ell(\zeta_+^i,B) \le \floor{q_i\theta_0}.
\]
Therefore
\[
\begin{split}
\ell(\zeta_+,B) 
&\le \sum_{i=1}^k\floor{q_i\theta_0}\\
&\le \floor{m_+\theta_0}.
\end{split}
\]
It follows that
\begin{equation}
\label{eqn:ellzeta+B}
\ell(\zeta_+,B) \le m_+\theta_0.
\end{equation}
(Since $\theta_0$ is irrational, equality holds if and only if $m_+=0$.) Similarly,
\begin{equation}
\label{eqn:ellzeta-B}
\ell(\zeta_-,B) \ge m_-\theta_0
\end{equation}
(with equality if and only if $m_-=0$). Combining \eqref{eqn:elletaB}, \eqref{eqn:elleaz}, \eqref{eqn:ellzeta+B} and \eqref{eqn:ellzeta-B} proves that \eqref{eqn:F} holds.
\end{proof}

If $R$ is a real number, let $ECH_*^{\mc{F}_B\le R}(Y,\lambda,J)$ denote the homology of the subcomplex generated by admissible orbit sets $B^m\alpha$ with $\mc{F}_B(B^m\alpha)\le R$. The following theorem asserts that this is a topological invariant. (In the special case when $\mc{F}_B$ takes values in a discrete set, the associated spectral sequence is also a topological invariant, by a similar argument.)

\begin{theorem}
\label{thm:filtration}
Let $Y$ be a closed three-manifold with $H_1(Y)=0$, let $\xi$ be a contact structure on $Y$, let $B\subset Y$ be a transverse knot, let $\theta_0\in\R\setminus \Q$, and let $R\in\R$. Let $\lambda$ be a contact form\footnote{Such a contact form always exists.} with $\Ker(\lambda)=\xi$ such that $B$ is an elliptic Reeb orbit with rotation number $\theta_0$. Let $J$ be any generic $\lambda$-compatible almost complex structure. Then $ECH_*^{\mc{F}_B\le R}(Y,\lambda,J)$ depends only on $Y$, $\xi$, $B$, $\theta_0$, and $R$.
\end{theorem}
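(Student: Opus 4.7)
My plan is to prove Theorem~\ref{thm:filtration} by adapting the standard ECH invariance strategy to respect the knot filtration. Suppose we have two admissible data $(\lambda_+, J_+)$ and $(\lambda_-, J_-)$, where each $\lambda_\pm$ is a contact form with $\op{Ker}(\lambda_\pm) = \xi$ making $B$ into an elliptic Reeb orbit of rotation number $\theta_0$, and $J_\pm$ is a generic $\lambda_\pm$-compatible almost complex structure on $\R\times Y$. Writing $\lambda_- = f\lambda_+$ and scaling if necessary so that $f<1$, I would form the exact symplectic cobordism $X = [s_-, s_+]\times Y$ with primitive $\lambda_X = g(s,y)\lambda_+(y)$ and symplectic form $\omega_X = d\lambda_X$, where $g$ interpolates smoothly from $f$ at $s=s_-$ to $1$ at $s=s_+$ and satisfies $\partial_s g > 0$. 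The ECH cobordism machinery reviewed in Section~\ref{sec:cobordism} then provides chain-level maps inducing $ECH(Y,\lambda_+)\to ECH(Y,\lambda_-)$, and the standard composition/homotopy argument shows these are the usual isomorphisms on total ECH.

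The new content is that such cobordism maps descend to the subcomplexes $ECC^{\mc{F}_B\le R}$. Because $B$ is a Reeb orbit of both $\lambda_\pm$, by choosing $g$ to depend only on $s$ in a neighborhood of $B$ the subset $B_X \eqdef [s_-,s_+]\times B \subset X$ becomes a symplectic cylinder, and we can pick an $\omega_X$-compatible almost complex structure $J_X$ on $X$ which agrees with $J_\pm$ on the symplectization ends and makes $B_X$ pseudoholomorphic. I would then prove a cobordism analog of Lemma~\ref{lem:filtration}: for any $J_X$-holomorphic current in $X$ from $B^{m_+}\alpha_+$ to $B^{m_-}\alpha_-$ no somewhere injective component of which equals $B_X$, the inequality $\mc{F}_B(B^{m_+}\alpha_+) \ge \mc{F}_B(B^{m_-}\alpha_-)$ holds. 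The proof would follow the same two-step pattern as in the symplectization case: intersection positivity of somewhere injective components of the current with $B_X$ controls the change in the linking number $\ell(\cdot,B)$ across slices $\{\pm s_0\}\times Y$ for $s_0$ large, while the Hofer-Wysocki-Zehnder asymptotic estimates at the two ends produce the $\floor{q_i \theta_0}$ bounds on linking numbers of end neighborhoods with $B$, using that the rotation number is $\theta_0$ at both ends.

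To transfer this inequality to the ECH cobordism map, I would invoke the ``holomorphic curves axiom'' for ECH cobordism maps to be established in Section~\ref{sec:cobordism}: if $B^{m_-}\alpha_-$ appears with nonzero coefficient in the image of $B^{m_+}\alpha_+$ under a chain-level representative of the cobordism map, then there is a broken holomorphic current from $B^{m_+}\alpha_+$ to $B^{m_-}\alpha_-$ consisting of a $J_X$-holomorphic level in $X$ together with $\R$-invariant-$J$ symplectization levels at each end, possibly passing through intermediate orbit sets. Applying the cobordism analog of Lemma~\ref{lem:filtration} to the middle level and the original Lemma~\ref{lem:filtration} to the symplectization levels, the filtration values telescope to give $\mc{F}_B(B^{m_+}\alpha_+) \ge \mc{F}_B(B^{m_-}\alpha_-)$. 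Hence the cobordism map preserves the filtration and restricts to $ECC^{\mc{F}_B\le R}$. A standard composition and homotopy argument, carried out within the path-connected space of pairs $(\omega_X, J_X)$ in which $B_X$ remains an $\omega_X$-symplectic and $J_X$-holomorphic cylinder with asymptotic rotation number $\theta_0$, then shows that the induced maps on $ECH^{\mc{F}_B\le R}$ are isomorphisms independent of all choices.

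The main obstacle I expect is extracting from Seiberg-Witten theory a version of the holomorphic curves axiom strong enough to support the intersection-positivity argument with $B_X$. ECH cobordism maps are defined via Taubes's correspondence between Seiberg-Witten monopoles and broken holomorphic currents, and producing chain-level representatives whose nonvanishing matrix coefficients are witnessed by broken currents of the specific type above — in particular, currents whose components can be taken to intersect $B_X$ positively and transversely away from isolated points — is the technically delicate step that will need to be worked out in Sections~\ref{sec:cobordism} and~\ref{sec:functoriality}. A secondary issue is verifying that the homotopies needed in the invariance argument can be constructed while preserving $B_X$ as a $J$-holomorphic symplectic cylinder and preserving the asymptotic rotation number $\theta_0$ at both ends.
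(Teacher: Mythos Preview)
Your proposal is correct and follows essentially the same approach as the paper's proof: build a product exact symplectic cobordism between the two contact forms, arrange for $\R\times B$ to be $J$-holomorphic, use the Holomorphic Curves axiom to represent the cobordism chain map by broken currents, apply the intersection-positivity/asymptotic argument of Lemma~\ref{lem:filtration} level by level to show the filtration is preserved, and then invoke the Homotopy Invariance, Composition, and Trivial Cobordisms properties to conclude the induced maps on $ECH^{\mc{F}_B\le R}$ are canonical isomorphisms. The paper packages the needed Seiberg-Witten input as Proposition~\ref{prop:elaboration} and works throughout with the action-filtered groups $ECH^L$ before taking a direct limit, but the substance matches what you outlined; your anticipated ``main obstacle'' is exactly the content of that proposition.
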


By Theorem~\ref{thm:filtration}, we can denote $ECH_*^{\mc{F}_B\le R}(Y,\lambda,J)$ by $ECH_*^{\mc{F}\le R}(Y,\xi,B,\theta_0)$. The proof of Theorem~\ref{thm:filtration} requires some additional technical preliminaries and is deferred to \S\ref{sec:functoriality}. Meanwhile, here is the key example that we will need.

\begin{example}
\label{ex:main}
Let $Y$ be the boundary of the ellipsoid $E(a,b)$
where $a,b>0$ are constants with $a/b$ irrational, with the restriction of the Liouville form $\lambda_0$, and let $B=\gamma_2$, see \S\ref{sec:ellipsoid}.
 Recall that $\gamma_2$ has rotation number $\theta_0=b/a$ and linking number $1$ with $\gamma_1$. The filtration of an ECH generator $\gamma_1^{d}\gamma_2^m$, where $d,m\in\N$, is then given by
\[
\begin{split}
\mc{F}_{\gamma_2}(\gamma_1^d\gamma_2^m) &= d + mb/a\\
&= a^{-1}\mc{A}(\gamma_1^d\gamma_2^m).
\end{split}
\]
It follows using Lemma~\ref{lem:ellipsoid} that if $x$ is an ECH generator with grading $I(x)=2k$ then
\[
\mc{F}_{\gamma_2}(x) = N_k\left(1,b/a\right).
\]
We conclude that if $k$ is a nonnegative integer then
\[
ECH_{2k}^{\mc{F}_{\gamma_2}\le R}(\partial E(a,b),\lambda_0,b/a) = \left\{\begin{array}{cl} \Z/2, & R\ge N_k(1,b/a),\\
0, & \mbox{otherwise}
\end{array}\right.
\]
and $ECH_*^{\mc{F}_{\gamma_2}\le R}=0$ for all other gradings $*$.
\end{example}

Example~\ref{ex:main} and the topological invariance in Theorem~\ref{thm:filtration} imply the following:

\begin{proposition}
\label{prop:filtcomp}
Let $\xi_0$ be the standard tight contact structure on $S^3$, let $B_0$ be the standard transverse unknot (given by a Hopf circle\footnote{Up to isotopy of transverse knots, this is the unique transverse unknot with self-linking number $-1$, see \cite{yasha,etnyre}.}), let $R\in\R$, and let $\theta_0>0$ be irrational. Then for $k\in\N$ we have
\[
ECH_{2k}^{\mc{F}\le R}(S^3,\xi_0,B_0,\theta_0) = \left\{\begin{array}{cl}
\Z/2, & R\ge N_k(1,\theta_0),\\
0, & \mbox{otherwise}
\end{array}\right.
\]
and
\[
ECH_{*}^{\mc{F}\le R}(S^3,\xi_0,B_0,\theta_0) = 0
\]
for all other gradings $*$.
\end{proposition}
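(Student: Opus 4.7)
The plan is to invoke the topological invariance supplied by Theorem~\ref{thm:filtration} and then reduce to the ellipsoid model computed in Example~\ref{ex:main}.

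First, I would choose specific parameters $a,b > 0$ with $a/b$ irrational and $b/a = \theta_0$; for instance $a = 1$, $b = \theta_0$. Then by Lemma~\ref{lem:ellipsoid}(a), the restriction of $\lambda_0$ to $\partial E(a,b)$ is a nondegenerate contact form, and its kernel is (under the standard diffeomorphism $\partial E(a,b) \cong S^3$) the standard tight contact structure $\xi_0$. The simple Reeb orbit $\gamma_2 = \{z_1 = 0\} \cap \partial E(a,b)$ is a Hopf circle, and it is elliptic with rotation number $b/a = \theta_0$. Since any two transverse unknots in $(S^3,\xi_0)$ with the same self-linking number are transversely isotopic (see e.g.~\cite{etnyre} or the references in the footnote to the statement), and both $\gamma_2$ and $B_0$ have self-linking number $-1$, there is a contactomorphism of $(S^3,\xi_0)$ carrying $\gamma_2$ to $B_0$ as transverse knots.

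Next, by Theorem~\ref{thm:filtration}, the filtered homology $ECH_*^{\mc{F}\le R}(S^3,\xi_0,B,\theta_0)$ depends only on the contact manifold $(S^3,\xi_0)$, the transverse isotopy class of the knot $B$, the number $\theta_0$, and $R$. Applying this to the contactomorphism constructed above, we obtain a canonical identification
\[
ECH_*^{\mc{F}\le R}(S^3,\xi_0,B_0,\theta_0) \;=\; ECH_*^{\mc{F}_{\gamma_2}\le R}(\partial E(a,b),\lambda_0,J)
\]
for any generic $\lambda_0$-compatible $J$.

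Finally, the right hand side is computed in Example~\ref{ex:main}: the ECH generators are $\gamma_1^d\gamma_2^m$ for $d,m \in \N$, the chain complex is concentrated in even gradings with exactly one generator in each grading $2k$, and the grading $2k$ generator has $\mc{F}_{\gamma_2}$-filtration equal to $N_k(1,b/a) = N_k(1,\theta_0)$. Since the differential must decrease (or preserve) $\mc{F}_{\gamma_2}$ by Lemma~\ref{lem:filtration} and there is at most one generator per grading, the differential vanishes, and the filtered homology in grading $2k$ is $\Z/2$ precisely when $R \ge N_k(1,\theta_0)$ and zero otherwise. All other gradings vanish. This is exactly the stated conclusion. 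The only non-routine step is the identification of $\gamma_2$ with $B_0$ as a transverse knot, but this is standard given the classification of transverse unknots by self-linking number.
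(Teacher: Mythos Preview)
Your proof is correct and follows the same approach as the paper, which simply states that the proposition follows from Example~\ref{ex:main} together with the topological invariance in Theorem~\ref{thm:filtration}. Your version fills in the details carefully; note that since $B_0$ is defined to be a Hopf circle and $\gamma_2$ is literally a Hopf circle in $\partial E(a,b)\cong S^3$, the transverse unknot classification is not strictly needed here, though invoking it does no harm.
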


We can now give:

\begin{proof}[Proof of Proposition~\ref{prop:input} (assuming Theorem~\ref{thm:filtration}).] The proof has two steps.

{\em Step 1.\/} We first prove the proposition in the case when $\lambda$ is nondegenerate.

By Theorem~\ref{thm:vc}, if $k$ is sufficiently large then
\begin{equation}
\label{eqn:vcinput}
\frac{c_k(S^3,\lambda)^2}{2k} \le \op{vol}(S^3,\lambda) + \epsilon.
\end{equation}
Assume that $k$ is sufficiently large in this sense.

Let $J$ be a generic $\lambda$-compatible almost complex structure on $\R\times S^3$. By the definition of $c_k(S^3,\lambda)$, there exists a cycle
\[
x=\sum_ix_i\in ECC_{2k}(S^3,\lambda,J)
\]
which represents the generator of $ECH_{2k}(S^3,\lambda)$, such that each orbit set $x_i$ in the cycle has symplectic action $\A(x_i) \le c_k(S^3,\lambda)$. In particular, for each $i$ we have
\begin{equation}
\label{eqn:input1}
\frac{\A(x_i)^2}{2k} \le \op{vol}(S^3,\lambda) + \epsilon.
\end{equation}

Now for at least one $i$, we must also have
\begin{equation}
\label{eqn:input2}
\mc{F}_B(x_i) \ge N_k(1,\op{rot}(B)).
\end{equation}
Otherwise, by Proposition~\ref{prop:filtcomp}, $x$ would be nullholomogous, contradicting the fact that $x$ represents the generator of $ECH_{2k}(S^3,\lambda)$.

Choose $i$ such that $x_i$ satisfies both \eqref{eqn:input1} and \eqref{eqn:input2}. Write $x_i=B^m\alpha$ where $m$ is a nonnegative ineger and $\alpha$ is an orbit set that does not include $B$. Then the inequalities \eqref{eqn:input1} and \eqref{eqn:input2} translate into the desired conclusions \eqref{eqn:abound} and \eqref{eqn:lbound} respectively.

{\em Step 2.\/} We now drop the assumption that $\lambda$ is nondegenerate.

If $\lambda$ is degenerate, let $\{f_n\}_{n=1,2,\ldots}$ be a sequence of functions $S^3\to\R$ with $0<f_n\le 1$ such that $f_n\to 1$ in $C^\infty$ and the contact form $\lambda_n=f_n\lambda$ is nondegenerate. We can arrange that each $\lambda_n$ satisfies the hypotheses of the proposition for each $n$, with the same binding orbit $B$, such that $\A(B)$ and $\op{rot}(B)$ do not depend on $n$.

We know from Theorem~\ref{thm:vc} (which does not assume nondegeneracy of $\lambda$) that if $k$ is sufficiently large then \eqref{eqn:vcinput} holds.  Assume that $k$ is sufficiently large in this sense. It then follows from the monotonicity property \eqref{eqn:monotone} that
\[
\frac{c_k(S^3,f_n\lambda)^2}{2k} \le \op{vol}(S^3,\lambda) + \epsilon
\]
for each $n$.
By the argument in Step 1, for each $n$ we can choose a nonnegative integer $m(n)$, and an orbit set $\alpha(n)$ not including $B$, such that
\begin{align}
\label{eqn:deg1}
\frac{(\A_n(\alpha(n)) + m(n)\A(B))^2}{2k} & \le \op{vol}(S^3,\lambda) + \epsilon,\\
\label{eqn:deg2}
\ell(\alpha(n),B) + m(n)\op{rot}(B) & \ge N_k(1,\op{rot}(B)).
\end{align}
Here $\A_n$ denotes the action determined by $\lambda_n$. 

The inequality \eqref{eqn:deg1} implies that there is an $n$-independent upper bound on the nonnegative integer $m(n)$. Thus we may pass to a subsequence so that $m(n)$ is constant; denote this constant by $m$. The inequality \eqref{eqn:deg1} also gives an $n$-independent upper bound on $\A_n(\alpha(n))$. On the other hand there is an $n$-independent lower bound on the action of any Reeb orbit for $\lambda_n$ (since the Reeb vector fields of $\lambda_n$ and $\lambda$ are nonsingular). It follows using \eqref{eqn:deg1} again that the total multiplicity of all Reeb orbits in $\alpha(n)$ has an $n$-independent upper bound. We can then pass to a further subsequence so that $\alpha(n)=\{\alpha_i(n),m_i\}$ where $m_i$ is an $n$-independent positive integer, and $\alpha_i(n)$ is a simple Reeb orbit for $\lambda_n$ with $\lim_{n\to\infty}\alpha_i(n)=\alpha_i$, where $\alpha_i$ is a Reeb orbit for $\lambda$. Since the contact form $\lambda$ is degenerate, the Reeb orbits $\alpha_i$ need not be simple or distinct. Nonetheless, since the binding $B$ is a nondegenerate Reeb orbit of $\lambda$, we do know that none of the orbits $\alpha_i$ is a cover of $B$. Thus $\alpha=\prod_i\alpha_i^{m_i}$ is a well-defined orbit set for $\lambda$ that does not include $B$, and the sequence of orbit sets $\alpha(n)$ converges ``as a current'' to $\alpha$. In particular $\A_n(\alpha(n))$ converges to $\A(\alpha)$, and $\ell(\alpha(n),B)$ is equal to $\ell(\alpha,B)$ for $n$ sufficiently large. Thus the inequalities \eqref{eqn:deg1} and \eqref{eqn:deg2} imply that the pair $(\alpha,m)$ satisfies the conclusions of the proposition.
\end{proof}

\section{Cobordism maps on ECH}
\label{sec:cobordism}

To prepare for the proof of Theorem~\ref{thm:filtration}, we now review some properties of maps on ECH induced by exact symplectic cobordisms. (For maps on ECH induced by more general strong symplectic cobordisms, see \cite[\S2.4]{qech} and more generally \cite{field}.)

\subsection{Exact symplectic cobordisms}

Let $(Y_+,\lambda_+)$ and $(Y_-,\lambda_-)$ be closed three-manifolds with contact forms. An {\em exact symplectic cobordism\/} from $(Y_+,\lambda_+)$ to $(Y_-,\lambda_-)$ is a pair $(X,\lambda)$, where $X$ is a compact four-manifold with boundary\footnote{Strictly speaking, we should say that an exact symplectic cobordism includes a choice of diffeomorphism $\partial X \simeq Y_+ \sqcup -Y_-$.} $\partial X = Y_+ - Y_-$, and $\lambda$ is a one-form on $X$ such that $d\lambda$ is symplectic and $\lambda|_{Y_\pm}=\lambda_\pm$.

Given an exact symplectic cobordism $(X,\lambda)$, a neighborhood of $Y_+$ in $X$ can be canonically identified with $(-\epsilon,0]\times Y_+$ for some $\epsilon>0$ such that $\lambda$ is identified with $e^s\lambda_+$, where $s$ denotes the $(-\epsilon,0]$ coordinate. This identification is defined so that $\partial_s$ corresponds to the unique vector field $V$ such that $\imath_Vd\lambda = \lambda$. Likewise, a neighborhood of $Y_-$ in $X$ can be canonically identified with $[0,\epsilon)\times Y_-$ so that $\lambda$ is identified with $e^s\lambda_-$. Using these identifications, we then glue to form the {\em completion\/}
\[
\overline{X} = ((-\infty,0]\times Y_-) \sqcup_{Y_-} X \sqcup_{Y_+} ([0,\infty)\times Y_+).
\]

If $(X_+,\lambda_+)$ is an exact symplectic cobordism from $(Y_1,\lambda_1)$ to $(Y_0,\lambda_0)$, and if $(X_-,\lambda_-)$ is an exact symplectic cobordism from $(Y_0,\lambda_0)$ to $(Y_{-1},\lambda_{-1})$, then we can similarly glue along $Y_0$ to define the {\em composition\/}
\[
X_-\circ X_+ = X_- \sqcup_{Y_0} X_+.
\]
The one-forms $\lambda_-$ and $\lambda_+$ glue to a one-form on $X_-\circ X_+$ which we denote by $\lambda_-\circ\lambda_+$ and which makes the latter into an exact symplectic cobordism from $(Y_1,\lambda_1)$ to $(Y_{-1},\lambda_{-1})$.

More generally, for $R\ge 0$, we can define the {\em stretched composition\/}
\[
X_- \circ_R X_+ = X_- \sqcup_{Y_0} ([-R,R]\times Y_0) \sqcup X_+.
\]
The one-forms $e^{\pm R}\lambda_\pm$ on $X_\pm$ and $e^s\lambda_0$ on $[-R,R]\times Y_0$ glue to a one-form $\lambda_-\circ_R\lambda_+$ on $X_-\circ_R X_+$ which makes the latter into an exact symplectic cobordism from $(Y_1,e^R\lambda_1)$ to $(Y_{-1},e^{-R}\lambda_{-1})$.

\subsection{Broken holomorphic currents}

Let $(X,\lambda)$ be an exact symplectic cobordism from $(Y_+,\lambda_+)$ to $(Y_-,\lambda_-)$. We say that an almost complex structure $J$ on the completion $\overline{X}$ is {\em cobordism-admissible\/} if $J$ is $d\lambda$-compatible on $X$ in the usual sense\footnote{That is, $g(\cdot,\cdot) = d\lambda(\cdot,J\cdot)$ is a Riemannian metric on $X$.}, and if there are $\lambda_\pm$-compatible almost complex structures $J_\pm$ on $\R\times Y_\pm$ such that $J$ agrees with $J_+$ on $[0,\infty)\times Y_+$ and with $J_-$ on $(-\infty,0]\times Y_-$.

\begin{remark}
\label{rem:composition}
One can compose cobordism-admissible almost complex structures as follows. Suppose that $(X_+,\lambda_+)$ is an exact symplectic cobordism from $(Y_1,\lambda_1)$ to $(Y_0,\lambda_0)$, and $(X_-,\lambda_-)$ is an exact symplectic cobordism from $(Y_0,\lambda_0)$ to $(Y_{-1},\lambda_{-1})$. Let $J_i$ be a $\lambda_i$-compatible almost complex structure on $\R\times Y_i$ for $i=-1,0,1$. Let $J_\pm$ be cobordism-admissible almost complex structures on the completions $\overline{X_\pm}$ that restrict to $J_{\pm1}$ and $J_0$ on the ends. 
 We can then glue $J_-$, $J_0$, and $J_+$ to define an almost complex structure $J_-\circ_R J_+$ on $\overline{X_- \circ_R X_+}$ for each $R\ge 0$.  When $R=0$, this is a cobordism-admissible almost complex structure on $\overline{X_-\circ X_+}$ which we denote simply by $J_-\circ J_+$. (When $R>0$, the almost complex structure $J_-\circ_R J_+$ is not quite cobordism-admissible, because the contact forms have been rescaled.)
\end{remark}

Now let $(X,\lambda)$ be an exact symplectic cobordism from $(Y_+,\lambda_+)$ to $(Y_-,\lambda_-)$, and fix a cobordism-admissible almost complex structure $J$ on the completion $\overline{X}$. If $\alpha_\pm$ are orbit sets for $\lambda_\pm$, we can define a set $\M^J(\alpha_+,\alpha_-)$ of $J$-holomorphic currents from $\alpha_+$ to $\alpha_-$ as before.

A {\em broken $J$-holomorphic current\/} from $\alpha_+$ to $\alpha_-$ is a tuple $\current=(\current_{N_-},\ldots,\current_{N_+})$ where $N_-\le 0\le N_+$ such that there are distinct orbit sets $\alpha_-=\alpha_-(N_-),\ldots,\alpha_-(0)$ for $\lambda_-$ and $\alpha_+(0),\ldots,\alpha_+(N_+)=\alpha_+$ for $\lambda_+$ such that:
\begin{itemize}
\item
$\current_0\in\M^J(\alpha_+(0),\alpha_-(0))$.
\item
If $k>0$ then $\current_k\in\M^{J_+}(\alpha_+(k),\alpha_+(k-1))/\R$.
\item
If $k<0$ then $\current_k\in\M^{J_-}(\alpha_-(k+1),\alpha_-(k))/\R$.
\end{itemize}
The currents $\current_k$ are called the {\em levels\/} of $\current$.
We denote the set of broken holomorphic currents from $\alpha_+$ to $\alpha_-$ by $\overline{\M^J(\alpha_+,\alpha_-)}$.

Suppose now that $\lambda_\pm$ is nondegenerate and $J_\pm$ is generic so that the chain complex $ECC(Y_\pm,\lambda_\pm,J_\pm)$ is defined.
We say that a linear map
\begin{equation}
\label{eqn:cjc}
\phi:ECC(Y_+,\lambda_+,J_+)\longrightarrow ECC(Y_-,\lambda_-,J_-)
\end{equation}
{\em counts $J$-holomorphic currents\/} if $\langle\phi\alpha_+,\alpha_-\rangle\neq 0$ implies that the set $\overline{\M^J(\alpha_+,\alpha_-)}$ is nonempty.

\subsection{Cobordism maps}

It was shown in \cite[Thm.\ 1.9]{cc2} that an exact symplectic cobordism $(X,\lambda)$ from $(Y_+,\lambda_+)$ to $(Y_-,\lambda_-)$, where $\lambda_\pm$ are nondegenerate, induces canonical maps
\begin{equation}
\label{eqn:cc2}
\Phi^L(X,\lambda): ECH^L(Y_+,\lambda_+) \longrightarrow ECH^L(Y_-,\lambda_-)
\end{equation}
satisfying various properties. One key property is the ``Holomorphic Curves Axiom'' asserting that if $J$ is a cobordism-admissible almost complex structure on $\overline{X}$ which restricts to generic $\lambda_\pm$-compatible almost complex structures $J_\pm$ on the ends, then $\Phi^L$ is induced by a (noncanonical) chain map as in \eqref{eqn:cjc} which counts $J$-holomorphic currents. This property is important because in all of the applications so far, it is the existence of a holomorphic curve which allows one to draw geometric conclusions.

We will need the following elaboration of this property. Below, if $\lambda$ is a contact form on $Y$ with Reeb vector field $R$, if $J$ is a $\lambda$-compatible almost complex structure on $\R\times Y$, and if $f:\R\to\R$ is a positive function, let $J^f$ denote the almost complex structure on $\R\times Y$ that agrees with $J$ on the contact structure $\xi$ and satisfies $J(\partial_s) = f(s)^{-1}R$. Also, following \cite[\S5.1]{cc2}, define a {\em strong homotopy\/} of exact symplectic cobordisms from $(Y_+,\lambda_+)$ to $(Y_-,\lambda_-)$ to be a pair $(X,\{\lambda_t\}_{t\in[0,1]})$ where $X$ is a compact four-manifold with boundary $\partial X = Y_+ - Y_-$, and $\{\lambda_t\}$ is a smooth family of $1$-forms on $X$ that is independent of $t$ near $\partial X$, such that for each $t$, the form $d\lambda_t$ is symplectic and $\lambda_t|_{Y_\pm} = \lambda_\pm$.

\begin{proposition}
\label{prop:elaboration}
Let $(Y_+,\lambda_+)$ and $(Y_-,\lambda_-)$ be closed three-manifolds with nondegenerate contact forms. Let $(X,\lambda)$ be an exact symplectic cobordism from $(Y_+,\lambda_+)$ to $(Y_-\lambda_-)$. Let
$J$ be a cobordism-admissible almost complex structure on $\overline{X}$ which restricts to generic $\lambda_\pm$-compatible almost complex structures $J_\pm$ on the ends. Then for each $L>0$ there exists a nonempty set $\Theta^L(X,\lambda,J)$ of chain maps
\begin{equation}
\label{eqn:cc2chain}
\phi: ECC^L(Y_+,\lambda_+,J_+) \longrightarrow ECC^L(Y_-,\lambda_-,J_-)
\end{equation}
satisfying the following properties:
\begin{description}
\item{(Holomorphic Curves)} Each $\phi\in \Theta^L(X,\lambda,J)$ counts $J$-holomorphic currents.
\item{(Homotopy Invariance)}
Let $(X,\{\lambda_t\}_{t\in[0,1]})$ be a strong homotopy of exact symplectic cobordisms from $(Y_+,\lambda_+)$ to $(Y_-,\lambda_-)$. Let $\{J_t\}_{t\in[0,1]}$ be a family of almost complex structures on $\overline{X}$ such that for each $t$, we have that $J_t$ is cobordism-admissible for $\lambda_t$, and $J_t$ restricts to $J_\pm$. Given
\[
\phi_i\in\Theta^L(X,\lambda_i,J_i)
\]
for $i=0,1$, there is a map
\[
K: ECC^L(Y_+,\lambda_+,J_+) \longrightarrow ECC^L(Y_-,\lambda_-,J_-)
\]
which counts $J_t$-holomorphic currents\footnote{That is, if $\langle K\alpha(1),\alpha(0)\neq 0$, then for some $t$ the moduli space $\overline{\M^{J_t}(\alpha(1),\alpha(0)}$ is nonempty. Here $\partial_\pm$ denotes the differential on the chain complex $ECC(Y_\pm,\lambda_\pm,J_\pm)$. We will use similar notation below without comment.} such that
\[
\partial_- K + K \partial _+ = \phi_0 - \phi_1.
\]
\item{(Composition)}
With the notation of Remark~\ref{rem:composition}, let
\[
\phi_\pm\in\Theta^L(X_\pm,\lambda_\pm,J_\pm)
\]
and
\[
\phi \in \Theta^L(X_-\circ X_+,\lambda_-\circ\lambda_+, J_-\circ J_+).
\]
Then there exists a chain homotopy
\[
K: ECC^L(Y_1,\lambda_1,J_1) \longrightarrow ECC^L(Y_{-1},\lambda_{-1},J_{-1})
\]
such that
\[
\partial_{-1} K + K \partial_1 = \phi_-\circ\phi_+ - \phi
\]
and $K$ counts $J_-\circ_R J_+$-holomorphic currents.
\item{(Trivial Cobordisms)}
Let $\lambda_0$ be a nondegenerate contact form on $Y_0$, and suppose that
\[
(X,\lambda)=\left([a,b]\times Y_0,e^s\lambda_0\right).
\]
Let $J_0$ be a generic $\lambda_0$-compatible almost complex structure on $\R\times Y_0$. Let
\[
\phi_0: ECC^L\left(Y_0,e^b\lambda_0,J_0^{e^b}\right) \longrightarrow ECC^L\left(Y_0,e^a\lambda_0,J_0^{e^a}\right)
\]
denote the chain map \eqref{eqn:scalingisochain}. Let $f:\R\to\R$ be a positive function such that $f(s)=e^{a}$ when $s\le a$ and $f(s)=e^{b}$ when $s\ge b$. Then
\[
\Theta^L\left([a,b]\times Y_0,e^s\lambda_0,J_0^f\right) = \{\phi_0\}.
\]
\end{description}
\end{proposition}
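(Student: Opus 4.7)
The plan is to define $\Theta^L(X,\lambda,J)$ to be the set of chain maps produced by the Seiberg-Witten-theoretic construction of \cite{cc2}, as the auxiliary abstract perturbations in that construction are allowed to vary. Nonemptiness and the \textbf{Holomorphic Curves} axiom are then essentially the content of \cite[Thm.\ 1.9]{cc2}: the construction always yields a chain map \eqref{eqn:cc2chain}, and Taubes's correspondence between ECH and monopole Floer homology ensures that each such chain map counts $J$-holomorphic currents. What is new here is that we keep track of the chain-level data rather than passing to homology; the set $\Theta^L$, rather than a single map, reflects the unavoidable freedom in choosing perturbations.

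The \textbf{Homotopy Invariance} axiom is proved by running the construction in parameterized form over $t\in[0,1]$ along the family $(\lambda_t,J_t)$: the parameterized moduli space of solutions yields a chain homotopy $K$, and the boundary identity $\partial_- K + K\partial_+ = \phi_0-\phi_1$ arises from the standard analysis of its one-dimensional components (endpoints at $t=0,1$ plus breakings into compositions with $\partial_\pm$). The parameterized Taubes correspondence then shows that $K$ counts $J_t$-holomorphic currents for some $t\in[0,1]$. The \textbf{Composition} axiom follows from a neck-stretching argument: consider the stretched family $X_-\circ_R X_+$ for $R\in[0,\infty)$ equipped with the almost complex structure $J_-\circ_R J_+$. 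At $R=0$ the construction gives $\phi$, while as $R\to\infty$ Seiberg-Witten solutions on the stretched cobordism split into pairs, producing the composition $\phi_-\circ\phi_+$. The chain homotopy $K$ is extracted from the parameterized moduli space over this $R$-interval, and counts $J_-\circ_R J_+$-holomorphic currents by the Taubes correspondence.

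The \textbf{Trivial Cobordisms} axiom requires a more direct argument. After reparametrizing the $s$-coordinate according to $f$, the completion of $([a,b]\times Y_0, e^s\lambda_0)$ equipped with $J_0^f$ is canonically identified with the symplectization of $Y_0$ carrying a $\lambda_0$-compatible almost complex structure built from $J_0$. Under this identification, moduli spaces of currents on the cobordism correspond to moduli spaces on the symplectization. By standard symplectization analysis, the only holomorphic currents between two ECH generators $\alpha_+,\alpha_-$ that preserve the filtration-type quantities involved consist of unions of covers of trivial cylinders, and necessarily $\alpha_+=\alpha_-$. A chain map counting only such trivial currents must then coincide with the scaling isomorphism $\phi_0$ of \eqref{eqn:scalingisochain}, so $\Theta^L=\{\phi_0\}$.

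The \textbf{main obstacle} I anticipate is the Composition axiom: controlling the Seiberg-Witten moduli space uniformly as $R\to\infty$, while preserving the action bound $L$ throughout, is technically intricate and requires extending the construction of \cite{cc2} to a one-parameter family of cobordisms in which the contact forms on intermediate slices are rescaled (so that, strictly speaking, $J_-\circ_R J_+$ is only cobordism-admissible up to this rescaling). Maintaining the chain-level Holomorphic Curves property across the full parameterized family, rather than just at its endpoints, is the key subtlety that distinguishes this from the homology-level statement already in \cite{cc2}.
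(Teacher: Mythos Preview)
Your overall strategy matches the paper's: define $\Theta^L(X,\lambda,J)$ via the Seiberg--Witten construction of \cite{cc2} (the paper specifies this as the set of chain maps arising for perturbation parameter $r\ge r_0$ with $r_0$ large enough), and deduce Holomorphic Curves, Homotopy Invariance, and Composition from the corresponding chain-level arguments in \cite[\S7.6]{cc2}. That part is fine.

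The gap is in your Trivial Cobordisms argument. Your claim that ``the only holomorphic currents between two ECH generators $\alpha_+,\alpha_-$ that preserve the filtration-type quantities involved consist of unions of covers of trivial cylinders'' is not what is needed, and as stated is not correct: there are many non-trivial $J_0$-holomorphic currents in the symplectization (for instance, those counted by the ECH differential), and the map $\phi$ is only required to respect the action filtration $\A<L$, not to preserve action exactly. So you have not ruled out $\langle\phi\alpha,\beta\rangle\neq 0$ with $\A(\beta)<\A(\alpha)$. The paper's argument supplies the missing ingredient: the Seiberg--Witten chain map counts solutions in \emph{ECH index zero} moduli spaces (via \cite{gradings}), and for generic $J_0$ every $J_0$-holomorphic current has ECH index $\ge 0$, with equality only for unions of trivial cylinders. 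Hence a broken current from $\alpha$ to $\beta\neq\alpha$ would force total ECH index strictly positive, a contradiction. Separately, to get $\langle\phi\alpha,\alpha\rangle=1$ (rather than $0$) one must invoke part (ii) of the Holomorphic Curves axiom in \cite[Thm.\ 1.9]{cc2}, which asserts exactly this when the only broken current is $\R\times\alpha$; your sketch does not address why the diagonal count is $1$.
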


\begin{proof}
In \cite[Thm.\ 1.9]{cc2}, a cobordism map \eqref{eqn:cc2} is constructed, which, given $J$ as in the proposition, is induced by a noncanonical chain map $\phi$ as in \eqref{eqn:cjc}. Such a chain map $\phi$ is defined by counting solutions to the Seiberg-Witten equations on $\overline{X}$, with a Riemannian metric determined by $J$. The Seiberg-Witten equations are perturbed using $r\hat{\omega}$, where $r$ is a large positive constant and $\hat{\omega}$ is a $2$-form on $\overline{X}$ which, up to insignificant factors, agrees with $d\lambda$ on $X$ and with $d\lambda_\pm$ on the ends; see \cite[\S4.2]{cc2}. It is shown in \cite[\S7]{cc2} that the map $\phi$ counts $J$-holomorphic currents if $r$ is sufficiently large. The proof entails showing that given a sequence of solutions to the $r$-perturbed Seiberg-Witten equations with $r\to\infty$, the zero set of one component of the spinor converges to a broken holomorphic current. 

The chain map $\phi$ is not unique, because it depends on $r$, and even for fixed $r$, certain additional perturbations of the Seiberg-Witten equations are needed to obtain transversality of the relevant moduli spaces of solutions, see \cite[\S4.2]{cc2}. Two different perturbations may give rise to different chain maps. In this case one can choose a homotopy between the two perturbations, and the two chain maps will then differ by a chain homotopy which counts solutions to the Seiberg-Witten equations for perturbations in the homotopy; see \cite[Prop.\ 5.2(b)]{cc2}. It is shown in the proof of the latter proposition in \cite[\S7.6]{cc2}, similarly to the proof that the chain maps count holomorphic currents, that if $r$ is sufficiently large then this chain homotopy will count $J$-holomorphic currents. 

We now define $\Phi^L(X,\lambda,J)$ to be the set of chain maps produced by the construction in \cite{cc2} for $r\ge r_0$, where $r_0$ is chosen sufficiently large (depending on $L$, $X$, $\lambda$, and $J$) so that any such chain map counts $J$-holomorphic currents, and any two such chain maps differ by a chain homotopy that counts $J$-holomorphic currents. This definition might depend on $r_0$, but in any case we can now show that as long as $r_0$ is chosen with the above properties, then the sets $\Phi^L(X,\lambda,J)$ will satisfy the requirements of the proposition.

It follows from the above discussion that $\Phi^L(X,\lambda,J)$ is nonempty and satisfies the Holomorphic Curves property. The proof of \cite[Prop.\ 5.2(b)]{cc2} as discussed above also implies the Homotopy Invariance property.

The Composition property likewise follows from the proof of \cite[Prop.\ 5.4]{cc2} in \cite[\S7.6]{cc2}.

Finally, to prove the Trivial Cobordisms property, we need to show that if
\[
\phi\in \Theta^L\left( [a,b]\times Y_0, e^s\lambda_0, J_0^f\right)
\]
then $\phi=\phi_0$. That is, if $\alpha$ and $\beta$ are orbit sets for $\lambda_0$ (which are equivalent to orbit sets for $e^a\lambda_0$ and $e^b\lambda_0$) in which no hyperbolic orbits have multiplicity greater than one, we need to show that $\langle\phi\alpha,\beta\rangle = 1$ if and only if $\alpha=\beta$.

To do so, first note that $J_0^{e^a}$-, $J_0^{e^n}$-, and $J_0^f$-holomorphic curves in $\R\times Y_0$ are equivalent to $J_0$-holomorphic curves in $\R\times Y_0$, via diffeomorphisms that stretch the $\R$ coordinate. Now any $J_0$-holomorphic current in $\R\times Y_0$ that is not a union of covers of trivial cylinders must strictly decrease symplectic action, cf.\ \S\ref{sec:filtered}. Consequently, the only broken $J_0^f$-holomorphic current from $\alpha$ to itself has just one level which is $\R\times\alpha$. It then follows from part (ii) of the ``Holomorphic Curves'' axiom in \cite[Thm.\ 1.9]{cc2} that $\langle\phi\alpha,\alpha\rangle=1$.

Suppose now that $\alpha\neq\beta$; we need to show that $\langle\phi\alpha,\beta\rangle=0$. Suppose to get a contradiction that $\langle\phi\alpha,\beta\rangle=1$. By part (i) of the ``Holomorphic curves'' axiom in \cite[Thm.\ 1.9]{cc2}, there exists a broken $J^f$-holomorphic current
\[
\current=(\current_{N_-},\ldots,\current_{N_+}) \in \overline{\M^{J^f}(\alpha,\beta)}.
\]
Now the chain map $\phi$, by the construction in \cite{cc2}, counts solutions to the Seiberg-Witten equations in index zero moduli spaces. By \cite[Thm.\ 5.1]{gradings}, the index of the Seiberg-Witten moduli space corresponds to the ECH index, with the result that
\[
\sum_{k=N_-}^{N_+}I(\current_k) = 0,
\]
where $I(\current_k)$ denotes the ECH index of $\current_k$, which is reviewed in \cite[\S3.4]{bn}.

On the other hand, each current $\current_k$ is equivalent to a $J_0$-holomorphic current in $\R\times Y$. By \cite[Prop.\ 3.7]{bn}, since $J_0$ is generic, each current $\current_k$ has ECH index $I(\current_k)\ge 0$, with equality if and only if $\current_k$ is a union of trivial cylinders with multiplicities (which by the way is not allowed when $k\neq 0$ by the definition of broken holomorphic current). Since $\alpha\neq\beta$, it follows that the total ECH index
\[
\sum_kI(\current_k) > 0.
\]
This is the desired contradiction.
\end{proof}

\section{Functoriality of the knot filtration}
\label{sec:functoriality}

We are now prepared to prove Theorem~\ref{thm:filtration}, which will finish the proof of Theorem~\ref{thm:main}.

\begin{proof}[Proof of Theorem~\ref{thm:filtration}.]
Let $\lambda_0$ and $\lambda_1$ be two contact forms for $\xi$ for which $B$ is an elliptic Reeb orbit with rotation number $\theta_0$, and let $J_i$ be a generic $\lambda_i$-compatible almost complex structure for $i=0,1$. We need to show that there is a canonical map
\begin{equation}
\label{eqn:functorial}
\Phi_{(\lambda_0,J_0),(\lambda_1,J_1)}: ECH_*^{\mc{F}_B\le R}(Y,\lambda_1,J_1) \longrightarrow ECH_*^{\mc{F}_B\le R}(Y,\lambda_0,J_0)
\end{equation}
with the following properties:
\begin{align}
\label{eqn:functorial1}
\Phi_{(\lambda,J),(\lambda,J)} &= \op{id},\\
\label{eqn:functorial2}  \Phi_{(\lambda_0,J_0),(\lambda_1,J_1)}\circ \Phi_{(\lambda_1,J_1),(\lambda_2,J_2)} &= \Phi_{(\lambda_0,J_0),(\lambda_2,J_2)}.
\end{align}

To define the map \eqref{eqn:functorial}, note that since $\Ker(\lambda_0)=\Ker(\lambda_1)$, we have $\lambda_1=e^{g_1}\lambda_0$ where $g_1:Y\to\R$ is a smooth function.
By the scaling isomorphism \eqref{eqn:scalingiso}, which preserves the filtration $\mc{F}_B$, we may assume that $g_1>0$ everywhere. Now let $g:\R\times Y\to\R$ be a smooth function with the following properties:
\begin{itemize}
\item $g(s,y)=s$ for $s\in(-\infty,\epsilon)$ for some $\epsilon>0$.
\item $g(s,y) = g_1(y) + s - 1$ for $s\in(1-\epsilon,\infty)$ for some $\epsilon>0$.
\item $\partial_sg>0$.
\end{itemize}
Observe that
\[
d(e^g\lambda_0) = e^g\left(d\lambda_0 + (\partial_sg)ds\wedge\lambda_0\right).
\]
is symplectic.
Thus $([0,1]\times Y,e^g\lambda_0)$ is an exact symplectic cobordism from $(Y,\lambda_1)$ to $(Y,\lambda_0)$, and its completion is identified with $\R\times Y$ in the obvious way.

Let $J$ be a cobordism-admissible almost complex structure on $\R\times Y$ which agrees with $J_1$ on $[1,\infty)\times Y$ and with $J_0$ on $(-\infty,0]\times Y$. Since $[1,\infty)\times B$ is a $J_1$-holomorphic submanifold, $(-\infty,0]\times B$ is a $J_0$-holomorphic submanifold, and $[0,1]\times B$ is a symplectic submanifold of $\R\times Y$, we can choose $J$ so that $\R\times B$ is $J$-holomorphic. Moreover, the space of $J$ satisfying the above conditions is contractible.

Given $L>0$, let
\[
\phi: ECC_*^L(Y,\lambda_1,J_1) \longrightarrow ECC_*^L(Y_0,\lambda_0,J_0)
\]
be a chain map in the set $\Phi^L([0,1]\times Y,e^g\lambda_0,J)$ provided by Proposition~\ref{prop:elaboration}. We claim that $\phi$ preserves the filtration $\mc{F}_B$, that is if $\langle\phi\alpha(1),\alpha(0)\rangle\neq 0$ then $\mc{F}_B(\alpha_1)\ge \mc{F}_B(\alpha_0)$. To see this, we know from the Holomorphic Curves property in Proposition~\ref{prop:elaboration} that if $\langle\phi\alpha(1),\alpha(0)\rangle\neq 0$ then there exists a broken holomorphic current
\[
\current=(\current_{N_-},\ldots,\current_{N_+}) \in \overline{\M^J(\alpha(1),\alpha(0))}.
\]
By Lemma~\ref{lem:filtration}, if $k\neq 0$ then $\mc{C}_k$ preserves the filtration $\mc{F}_B$, i.e.\ $\mc{F}_B$ of the orbit set corresponding to the positive ends of $\mc{C}_k$ is greater than or equal to $\mc{F}_B$ of the orbit set corresponding to the negative ends. Since $\R\times B$ is $J$-holomorphic, the same intersection positivity argument shows that $\mc{C}_0$ also preserves the filtration $\mc{F}_B$, and therefore $\mc{C}$ preserves the filtration as well.

Consequently, restricting to the subcomplex where $\mc{F}_B\le R$ and passing to homology, we obtain a map
\begin{equation}
\label{eqn:phistar}
\phi_*: ECH_*^{\stackrel{\A<L}{\mc{F}_B\le R}}(Y_1,\lambda_1,J_1) \longrightarrow ECH_*^{\stackrel{\A<L}{\mc{F}_B\le R}}(Y_0,\lambda_0,J_0).
\end{equation}
Here the superscripts indicate that we are taking the homology of the subcomplex spanned by generators for which both $\mc{A}<L$ and $\mc{F}_B\le R$.

By the Homotopy Invariance property in Proposition~\ref{prop:elaboration} and intersection positivity as in in Lemma~\ref{lem:filtration} again, the map \eqref{eqn:phistar} does not depend on the choice of $g$, $J$, or $\phi\in\Theta^L([0,1]\times Y,e^g\lambda_0,J)$. We now define the map \eqref{eqn:functorial} to be the direct limit of the maps \eqref{eqn:phistar} as $L\to\infty$.

The desired property \eqref{eqn:functorial1} now follows from the Trivial Cobordisms property in Proposition~\ref{prop:elaboration}. And the desired property \eqref{eqn:functorial2} follows from the Composition property in Proposition~\ref{prop:elaboration} together with the same intersection positivity argument as before.
\end{proof}

\begin{remark}
If $B_+$ and $B_-$ are transverse knots in $(Y,\xi)$, define a {\em symplectic cobordism\/} from $B_+$ to $B_-$ to be a compact symplectic submanifold $\Sigma\in[-R,R]\times Y$ for some $R>0$ such that 
\[
\begin{split}
\Sigma\cap ((R-\epsilon,R]\times Y) &= (R-\epsilon,R]\times B_+,\\
\Sigma\cap ([-R,-R+\epsilon)\times Y) &= [-R,-R+\epsilon)\times B_-
\end{split}
\]
for some $\epsilon>0$. The proof of Theorem~\ref{thm:filtration} shows more generally that if $H_1(Y)=0$, then $ECH_*^{\mc{F}\le R}(Y,\xi,\cdot,\theta_0)$ is a functor on the category whose objects are transverse knots in $(Y,\xi)$ and whose morphisms are symplectic cobordisms modulo isotopy. We do not know much about this functor in general, but it could be an interesting topic for future research.
\end{remark}

\begin{remark}
One can also define an analogous functor for any other kind of contact homology in three dimensions whose differential counts holomorphic currents and which admits cobordism maps satisfying an analogue of Proposition~\ref{prop:elaboration}.
\end{remark}

\end{document}